\newtheorem{theorem}{Theorem}[section]
\newtheorem{lemma}[theorem]{Lemma}
\newtheorem{corollary}[theorem]{Corollary}
\newtheorem{proposition}[theorem]{Proposition}
\theoremstyle{remark}
\theoremstyle{definition}
\numberwithin{equation}{section}
\DeclareMathOperator{\Cdb}{{\mathbb C}}
\DeclareMathOperator{\Ndb}{{\mathbb N}}
\begin{document}

\title[Operator algebras: Weak compactness and spectrum]{Operator algebras with contractive approximate identities: Weak compactness and the spectrum}

\thanks{The first author was supported by a grant DMS  1201506 from the NSF.
The second author is grateful for  support from UK research council
grant  EP/K019546/1}

\author{David P. Blecher}
\address{Department of Mathematics, University of Houston, Houston, TX
77204-3008}
\email[David P. Blecher]{dblecher@math.uh.edu}

\author{Charles John Read}
\address{Department of Pure Mathematics,
University of Leeds,
Leeds LS2 9JT,
England}
 \email[Charles John Read]{read@maths.leeds.ac.uk}

\begin{abstract}  We continue our study of operator algebras with contractive approximate identities
(cais) by presenting a couple of interesting examples of operator algebras with cais,
which in particular answer questions raised in previous papers in this series, for example
about whether, roughly speaking, 
`weak compactness' of an operator algebra, or the lack of it, can be seen in the spectra
of its elements. \end{abstract}

\maketitle

\section{Introduction}

An {\em operator algebra} is a closed subalgebra of $B(H)$, for a
Hilbert space $H$.   An operator algebra with a
 contractive approximate identity (cai) is called
 {\em approximately unital}.
Here we construct an interesting new approximately unital operator algebra, and use it to solve questions arising in our earlier work, for example
about whether, roughly speaking, 
`weak compactness' of an operator algebra, or the lack of it, can be seen in the spectra
of its elements.  We now describe some background for this.   We recall that a semisimple Banach algebra $A$ is
a modular annihilator algebra iff no element of $A$ has a nonzero
limit point in its spectrum \cite[Theorem 8.6.4]{Pal}.
 If $A$ is also commutative then this is equivalent to
the Gelfand spectrum of $A$ being discrete \cite[p.\ 400]{LN}.
We write $M_{a,b} : A \to A : x \mapsto axb$,
where $a, b \in A$.   Recall that a Banach algebra
is {\em compact} if the map $M_{a,a}$ is compact
for all $a \in A$.  We say that $A$ is {\em weakly compact} if
$M_{a,a}$ is weakly compact for all $a \in A$.  If $A$ is approximately unital  and commutative then $A$ is
weakly compact iff $A$ is an  ideal  in its bidual  $A^{**}$
(see e.g.\ \cite[1.4.13]{Dal}).
 In the noncommutative case  $A$ is
weakly compact  iff $A$ is a
{\em hereditary subalgebra} (or {\em HSA}) in its bidual 
(see \cite[Lemma 5.1]{ABR}).
It is known  \cite{Pal} that every compact semisimple Banach algebra
is a modular annihilator algebra (and conversely every semisimple `annihilator algebra', or
more generally any Banach algebra
with dense socle, is compact).  Thus it is of interest to know
if there are any connections for operator algebras
between being a semisimple modular annihilator algebra, and being
weakly compact.  See the discussion after Proposition 5.6 in \cite{ABR}, where some specific questions
along these lines are raised.  We have solved these here; indeed
we have by now solved essentially all open 
questions posed in our previous papers
\cite{BRI, BRII, ABR}.  In particular we show here, first, that 
 a semisimple approximately unital
operator algebra which is a modular annihilator algebra
need not be weakly compact, nor need it be nc-discrete.  
(The latter term  will be defined before Corollary \ref{iscr}, 
when it is needed.)   
Second,  an approximately unital 
commutative weakly compact
semisimple operator algebra $A$ need not have countable or scattered spectrum
(in fact the spectrum of some of its elements can have nonempty interior).

\section{A semisimple operator algebra which is a modular annihilator algebra but is not weakly compact}
  \label{notw}

Let $(c_n)$ be an unbounded increasing sequence in $(0,\infty)$.  For each $n \in \Ndb$ let
$d_n$ be the diagonal matrix in $M_n$ with
$c_n^k$ as the $k$th  diagonal entry.  If $M$ is the von Neumann algebra $\oplus_n^\infty \, (M_n  \oplus
M_n)$, we let $N$ be its weak*-closed unital subalgebra consisting of tuples $((x_n , d_n x_n d_n^{-1}))$,
for all $(x_n) \in \oplus_n^\infty \, M_n$.       We define $A_{00}$ to
be the finitely supported tuples in $N$, and  $A_0$ to be the closure of $A_{00}$.
 That is, $A_0$ is the intersection of the
$c_0$-sum $C^*$-algebra  $\oplus_n^{\circ} \, (M_n  \oplus
M_n)$ with $N$.   We sometimes simply write $(x_n)$ for the associated tuple in $N$.

\begin{lemma} \label{lem1}  Let $A$ be any closed subalgebra of $N$ containing $A_0$.  Then $A$ is
semisimple.  \end{lemma}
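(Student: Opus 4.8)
The plan is to show that the Jacobson radical $\mathrm{rad}(A)$ is zero. I will use two standard facts: that $\mathrm{rad}(A)$ is a two-sided ideal all of whose elements are quasinilpotent, so that $a \in \mathrm{rad}(A)$ forces $r(ba) = 0$ for every $b \in A$ (where $r(z) = \lim_k \|z^k\|^{1/k}$ is the intrinsic spectral radius); and that in the von Neumann algebra $M = \oplus_n^\infty (M_n \oplus M_n)$ the spectral radius of a tuple is the supremum of the spectral radii of its coordinates. The key structural observation is that, for each fixed $m$, the algebra $A$ contains the entire finite-dimensional block $B_m = \{(y, d_m y d_m^{-1}) : y \in M_m\}$ placed in the $m$th coordinate (all other coordinates zero): such an element is a finitely supported tuple in $N$, hence lies in $A_{00} \subseteq A_0 \subseteq A$.

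First I would fix $a \in \mathrm{rad}(A)$, written as the tuple $(x_n)$, so that its $m$th coordinate acts as $(x_m, d_m x_m d_m^{-1})$. For a chosen $y \in M_m$, let $b \in B_m \subseteq A$ be supported in the $m$th coordinate with first component $y$. Then $ba$ is again supported only in the $m$th coordinate, with first component $y x_m$ and second component $d_m (y x_m) d_m^{-1}$. Since these two components are conjugate by the invertible matrix $d_m$, they have equal spectral radius, and as the element lives in a single block of $M$ its spectral radius equals exactly $r(y x_m)$, computed in $M_m$. Because $a \in \mathrm{rad}(A)$, we have $r(ba) = 0$, and hence $r(y x_m) = 0$ for every $y \in M_m$.

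To finish, I would specialize to $y = x_m^*$, which lies in $M_m$ and so yields an admissible $b$. Then $y x_m = x_m^* x_m$ is a positive semidefinite matrix whose spectral radius, namely its largest eigenvalue, is $0$; therefore $x_m^* x_m = 0$ and so $x_m = 0$. As $m \in \Ndb$ was arbitrary, every coordinate of $a$ vanishes, giving $a = 0$. Thus $\mathrm{rad}(A) = 0$, i.e. $A$ is semisimple.

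I do not anticipate a genuine obstacle: conceptually the whole argument is a coordinate-by-coordinate reduction to the full matrix algebra $M_m$, which is semisimple. The only points demanding care are the verification that multiplication by a single-block element $b$ collapses the spectral-radius computation to the $m$th coordinate, and the remark that conjugation by $d_m$ is a similarity internal to a fixed finite-dimensional block and so preserves spectra. In particular, the unboundedness of the sequence $(c_n)$, which is essential for the later failure of weak compactness, plays no role whatsoever in this lemma.
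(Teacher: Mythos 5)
Your proof is correct, and it shares the paper's central device: localize a putative radical element $a = (x_n)$ to its $m$th block by multiplying by an element of $A_{00}$ supported there. The two arguments part ways only at the finish. The paper picks $y_m$ with $y_m x_m e_i = e_i$ for a suitable basis vector $e_i$, so that $e_i$ lies in the kernel of $I - yx$; then $I - yx$ fails to be invertible in $A^1$ and the quasi-invertibility characterization of the radical applies. You instead take $y_m = x_m^*$ and argue that $ba$ lies in the radical ideal, hence is quasinilpotent, while $r(ba) = r(x_m^* x_m) = \Vert x_m \Vert^2$ by the $C^*$-identity. Both are sound; the paper's choice is marginally more elementary (no spectral radius formula, no adjoints), while yours makes the reduction to the semisimplicity of the single block $M_m$ more transparent. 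One caveat worth flagging: your blanket assertion that the spectral radius of a tuple in $M$ is the supremum of the coordinate spectral radii is false for general infinite tuples --- large nilpotent Jordan blocks such as the $L_n$ used later in this very paper have coordinate spectral radius $0$ yet assemble into tuples of spectral radius $1$ --- but you only invoke it for the finitely supported tuple $ba$, where it reduces to the correct statement that $\lim_k \Vert (ba)^k \Vert^{1/k}$ is computed in the single nonzero block and is unchanged by the similarity $d_m(\cdot)d_m^{-1}$.
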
 \begin{proof}  For any nonzero $x = (x_n) \in A$, choose
$m$ and $i$ with $z = x_m e_i \neq 0$, where $(e_i)$ is the usual basis of
$\Cdb^m$.   Choose $y_m \in M_m$ with $y_m z = e_i$, and otherwise set $y_n = 0$.
Then $y  = (y_n) \in A_0$, and the copy of $e_i$ is in the kernel of $I-yx$.
Hence $I - yx$ is not invertible
in $A^1$, and so $x$ is not in the Jacobson radical by a well known
characterization of that radical.  Thus $A$ is
semisimple.  \end{proof}

Endow $M_n$ with a norm $p_n(x) = \max \{ \Vert x \Vert , \Vert d_n x d_n^{-1} \Vert \}$.
Then $N \cong \oplus^\infty_n \, (M_n, p_n(\cdot))$ isometrically, and we
write $p(\cdot)$ for the norm on the latter space, so
$p((x_n)) = \sup_n \, p_n(x_n)$.   We sometimes view $p$ as the  norm on $N$ via
the above identification.
Let $L_n$ be the left shift on $\Cdb^n$, so that in particular $L_n e_1 = 0$.
Note that $d_n L_n d_n^{-1} = \frac{1}{c_n} \, L_n$, and that $p_n(L_n) = 1$ if $n \geq 2$.
For $n, k \in \Ndb$ with $n \geq k$ define an `integer interval' $E_{n,k} = \Ndb_0 \cap [\frac{n}{k} ,
\frac{2n}{k}]$.  Set $\mu_{n,k} = |E_{n,k}|$ if $n \geq k$, with $\mu_{n,k} = 1$ if
$n < k$.  Then $\mu_{n,k}$ is strictly positive for all $n, k$.  For $n \geq k$
define $u_{n,k} = \frac{1}{\mu_{n,k}} \, \sum_{i \in E_{n,k}} \, (L_n)^i \in M_n$.
If $n < k$ set $u_{n,k} = I_n$.   Define $u_k = (u_{n,k})_{n \in \Ndb}$.  We have
$$p_n(u_{n,k}) \leq \max_{i \in E_{n,k}} \, p((L_n)^i) \leq 1 , \qquad n \geq k,$$
and so  $$p(u_k) \leq 1 , \qquad k \in \Ndb .$$

The operator algebra we are interested in is
$$A = \{ a \in N : p(a u_k -a) + p(u_k a - a) \to 0 \} .$$
This will turn out to be
the largest subalgebra of $N$ having $(u_k)$ as a cai.
First,   a preliminary estimate:

\begin{lemma} \label{lem2}  Let $L \in {\rm Ball}(B)$ for a Banach algebra $B$.
Suppose that  $E_1$ is a set of $\mu_1$ integers from $[0, n]$, and
$E_2$ is a set of $\mu_2$ consecutive nonnegative integers.    If
$u_i = \frac{1}{\mu_{i}} \, \sum_{i \in E_{i}} \, L^i$ then
$$\Vert u_1 u_2 - u_2 \Vert \leq \frac{2 n}{\mu_2}.$$
 \end{lemma}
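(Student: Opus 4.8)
The plan is to decompose the product $u_1 u_2$ over the individual powers making up $u_1$, and to exploit that $E_2$ consists of \emph{consecutive} integers, so that multiplying $u_2$ by a single copy of $L$ merely translates its defining window by one step. Writing $u_1 = \frac{1}{\mu_1} \sum_{a \in E_1} L^a$ and $u_2 = \frac{1}{\mu_2} \sum_{b \in E_2} L^b$, and using that $E_1$ has $\mu_1$ elements so that $\frac{1}{\mu_1}\sum_{a \in E_1} 1 = 1$, I would first record the identity
$$u_1 u_2 - u_2 = \frac{1}{\mu_1} \sum_{a \in E_1} (L^a u_2 - u_2) .$$
By the triangle inequality it then suffices to bound $\Vert L^a u_2 - u_2 \Vert$ uniformly for $a \in E_1$, and to average.

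The key step is a telescoping estimate. Writing $E_2 = \{ m, m+1, \ldots, m + \mu_2 - 1 \}$, multiplying by $L$ shifts the window by one, dropping the bottom term $L^m$ and introducing the top term $L^{m + \mu_2}$, so that
$$L u_2 - u_2 = \frac{1}{\mu_2} ( L^{m + \mu_2} - L^m ) .$$
Since $L \in {\rm Ball}(B)$ gives $\Vert L^j \Vert \leq 1$ for every $j$, this yields $\Vert L u_2 - u_2 \Vert \leq 2/\mu_2$. Telescoping,
$$L^a u_2 - u_2 = \sum_{j=0}^{a-1} L^j ( L u_2 - u_2 ) ,$$
and hence $\Vert L^a u_2 - u_2 \Vert \leq a \cdot \frac{2}{\mu_2} = \frac{2a}{\mu_2}$.

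Finally, since $E_1$ is a set of integers from $[0,n]$, every $a \in E_1$ satisfies $a \leq n$, so each summand above is at most $2n/\mu_2$; averaging over the $\mu_1$ elements of $E_1$ gives $\Vert u_1 u_2 - u_2 \Vert \leq 2n/\mu_2$, as required.

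There is no deep obstacle here; the only thing to get right is the bookkeeping of which powers of $L$ survive when one window is translated relative to the other. Using the one-step identity for $L u_2 - u_2$ together with telescoping, rather than directly comparing the two overlapping windows in $L^a u_2$ and $u_2$, sidesteps having to split into the cases $a \leq \mu_2$ and $a > \mu_2$ and keeps the bound transparent. The hypothesis that $E_2$ be consecutive is exactly what powers the telescoping, while the role of $E_1 \subseteq [0,n]$ is only to supply the uniform bound $a \leq n$.
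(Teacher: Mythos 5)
Your proof is correct, and it takes a genuinely different route from the paper's. The paper expands $u_1u_2$ as $\sum_m \lambda_m L^m$, where $\lambda_m$ is $\frac{1}{\mu_1\mu_2}$ times the number of pairs in $E_1\times E_2$ summing to $m$; it then observes that for $m$ in a middle range the coefficient $\lambda_m$ equals $\frac{1}{\mu_2}$ exactly, so that $u_1u_2-u_2$ is supported on two boundary windows totalling $2n$ powers of $L$, each with coefficient of modulus at most $\frac{1}{\mu_2}$. That counting argument requires the preliminary case split (dispatching $n\ge\mu_2$ by the trivial bound $\Vert u_1u_2-u_2\Vert\le 2\le \frac{2n}{\mu_2}$, and using $\mu_1\le n+1\le\mu_2$ in the remaining case). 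Your decomposition $u_1u_2-u_2=\frac{1}{\mu_1}\sum_{a\in E_1}(L^au_2-u_2)$, combined with the one-step identity $Lu_2-u_2=\frac{1}{\mu_2}(L^{m+\mu_2}-L^m)$ and telescoping, reaches the same bound $\Vert L^au_2-u_2\Vert\le \frac{2a}{\mu_2}\le\frac{2n}{\mu_2}$ with no case analysis at all, and isolates more cleanly where the hypothesis that $E_2$ is consecutive enters (it is exactly what collapses $Lu_2-u_2$ to two terms). The two arguments are ultimately seeing the same cancellation --- your telescoped sum, written out, is precisely the two boundary windows the paper identifies --- but yours is shorter, uniform in the parameters, and arguably the better proof to record.
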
 \begin{proof}   If $n \geq \mu_2$
then
$$\Vert u_1 u_2 - u_2 \Vert \leq \Vert u_1 \Vert \Vert u_2  \Vert + \Vert u_2  \Vert
\leq 2 \leq \frac{2n}{\mu_2}.$$
So we may assume that $n < \mu_2$.  Let $m_0 = \min \, E_2$.
Then
$$u_1 u_2 = \frac{1}{\mu_1 \, \mu_2} \, \sum_{j \in E_1, k \in E_2} \,
 L^{j+k}
= \sum_{m_0 \leq m < m_0 + n + \mu_2} \, \lambda_m L^m , $$
where $\lambda_m$ is $\frac{1}{\mu_1 \, \mu_2}$ times the number of pairs in
$E_1 \times E_2$ which sum to $m$.
  Since
$$\mu_1 \leq n+ 1 \leq \mu_2,$$
and since the number of such pairs cannot exceed $\mu_1 = |E_1|$,
we have $$0 \leq \lambda_m \leq \frac{1}{\mu_2}.$$
If $m \in [m_0 + n, m_0 + \mu_2)$ then $m - k \in E_2$
for any integer $k$ in $[0,n]$, and so $m - E_1 \subset E_2$.
We deduce that $$\lambda_m = \frac{1}{\mu_2} , \qquad m \in [m_0 + n, m_0 + \mu_2
).$$ Since $u_2 = \frac{1}{\mu_2}  \, \sum_{m_0 \leq m < m_0 + \mu_2} \, L^m$ we have
$$u_1 u_2 - u_2 = \sum_{m_0 \leq m < m_0 + n} \,
(\lambda_m - \frac{1}{\mu_2}) \, L^m \, + \, \sum_{m_0 + \mu_2
\leq m <  m_0 + n + \mu_2} \, \lambda_m L^m .$$
No coefficient in the last sum has modulus greater than $\frac{1}{\mu_2}$, and there
are $2n$ nonzero coefficients, so
$$\Vert u_1 u_2 - u_2 \Vert \leq \frac{2n}{\mu_2} \, \max_m \,
\Vert L^m \Vert = \frac{2n}{\mu_2} $$
as desired.   \end{proof}

\begin{corollary} \label{isab}  Let $A = \{ a \in N : p(a u_k -a) + p(u_k a - a) \to 0 \}$.
Then   $A$ is a semisimple operator algebra with cai $(u_k)$, and $A_0$
is an ideal in $A$.
\end{corollary}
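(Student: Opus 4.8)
The plan is to verify in turn that $A$ is a closed subalgebra of $N$, that $A_0\subseteq A$, that each $u_k$ lies in $A$, and that $A_0$ is an ideal; everything else is then automatic. Indeed, the norm $p$ is exactly the norm $N$ inherits as a closed subalgebra of the C*-algebra $M=\oplus_n(M_n\oplus M_n)\subseteq B(H)$, via $(x_n)\mapsto ((x_n,d_nx_nd_n^{-1}))$, so a closed subalgebra of $N$ is an operator algebra; semisimplicity then follows from Lemma \ref{lem1} the moment we know $A_0\subseteq A$; and the approximate-identity action $u_ka\to a$, $au_k\to a$ for $a\in A$ is the very definition of $A$, so the only nonroutine membership to check is $u_k\in A$ itself.

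First I would record that $p$ is submultiplicative (each $p_n$ is, since $x\mapsto(x,d_nxd_n^{-1})$ is a homomorphism into a C*-algebra) and that $p(u_k)\leq 1$. Linearity makes $A$ a subspace; writing $abu_k-ab=a(bu_k-b)$ and $u_kab-ab=(u_ka-a)b$ and using submultiplicativity shows $A$ is closed under products. Closedness of $A$ is a routine $3\varepsilon$ estimate using $p(u_k)\leq 1$: if $a_i\to a$ with $a_i\in A$, then $p(au_k-a)\leq 2\,p(a-a_i)+p(a_iu_k-a_i)$, and one lets $i$ then $k$ grow. For $A_0\subseteq A$, a finitely supported tuple $a=(x_n)$ has $x_nu_{n,k}=x_n$ once $k>\max(\mathrm{supp}\,a)$, since $u_{n,k}=I_n$ for $k>n$; hence $p(au_k-a)=p(u_ka-a)=0$ eventually, so $A_{00}\subseteq A$, and closedness gives $A_0=\overline{A_{00}}\subseteq A$.

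The main obstacle is showing $u_k\in A$, i.e. $p(u_ku_j-u_k)\to 0$ as $j\to\infty$ for fixed $k$ (the left-hand version is \emph{identical}, since in each coordinate $u_{n,k}$ and $u_{n,j}$ are polynomials in $L_n$ and therefore commute). Coordinates with $n<j$ contribute nothing, as there $u_{n,j}=I_n$. For $n\geq j$ (so $n>k$ once $j>k$) I would apply Lemma \ref{lem2} in $(M_n,p_n)$ with $L=L_n$, keeping $E_2=E_{n,k}$ and taking $E_1=E_{n,j}$. The crucial observation is that the parameter $n$ in the bound $2n/\mu_2$ of Lemma \ref{lem2} is really any upper bound for $E_1$; since $\max E_{n,j}\leq 2n/j$, the lemma gives
$$\|u_{n,k}u_{n,j}-u_{n,k}\|\leq\frac{2\lfloor 2n/j\rfloor}{\mu_{n,k}}\leq\frac{4n/j}{\mu_{n,k}}\leq\frac{8k}{j},$$
using $\mu_{n,k}=|E_{n,k}|\geq n/(2k)$ for large $n$, a bound \emph{uniform in} $n$. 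This controls the first entry of $p_n$; for the twisted entry I would use $d_nL_n^i d_n^{-1}=c_n^{-i}L_n^i$ and $\min E_{n,k}\geq n/k$ to get $\|d_nu_{n,k}d_n^{-1}\|\leq c_n^{-n/k}$, and similarly for the product, so the twisted part of $u_{n,k}u_{n,j}-u_{n,k}$ is at most $2c_n^{-n/k}$, which for $n\geq j$ is dominated by $c_j^{-j/k}\to 0$ (as $c_n\uparrow\infty$). Combining, $\sup_{n\geq j}p_n(u_{n,k}u_{n,j}-u_{n,k})\to 0$, so $u_k\in A$, and with $p(u_k)\leq 1$ the sequence $(u_k)$ is a cai.

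Finally, $A_0$ is precisely $\{(y_n)\in N:p_n(y_n)\to 0\}$. For $a=(x_n)\in A$ and $b=(y_n)\in A_0$, submultiplicativity gives $p_n(x_ny_n)\leq p(a)\,p_n(y_n)\to 0$ and likewise $p_n(y_nx_n)\to 0$; since $ab,ba\in N$, both lie in $A_0$. Hence $A_0$ is a two-sided ideal in $A$, which completes the argument.
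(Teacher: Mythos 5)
Your proposal is correct and follows essentially the same route as the paper: the only substantive point is $u_k\in A$, which you establish exactly as the paper does, by applying Lemma \ref{lem2} with $E_1$ the (short) interval of the large-index averaging operator and $E_2$ the (long) interval of the fixed one, giving a bound of order $k/j$ uniform in $n$; the verifications that $A$ is a closed subalgebra, that $A_{00}\subseteq A$, and that $A_0$ is an ideal match the paper's (which treats them as routine). Your separate estimate of the ``twisted'' entry $\Vert d_n(\cdot)d_n^{-1}\Vert$ is redundant, since applying Lemma \ref{lem2} in the Banach algebra $(M_n,p_n)$ with $p_n(L_n)\le 1$ (as you say you do, and as the paper does) already controls both entries of $p_n$ at once.
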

\begin{proof}   We first show $u_r \in A$ for all $r \in \Ndb$.  Let $k \geq r$.
If $n \geq k$
then $E_{n,k}$ is a subset of $[0, \frac{2n}{k}]$, and $\mu_{n,k}$ is
either $\lfloor \frac{n}{k} \rfloor$ or $\lfloor \frac{n}{k}+1 \rfloor$.
By Lemma \ref{lem2}, we have
$$p_n(u_{n,k} u_{n,r} - u_{n,r}) \leq \frac{2 \, \lfloor \frac{2n}{k} \rfloor}{\lfloor \frac{n}{r} \rfloor} ,
r \geq n \geq k .$$
If $n < k$ then $p_n(u_{n,k} u_{n,r} - u_{n,r}) = 0$.  If $k \geq 2 t r$ for an integer
$t > 1$ then $$\frac{2 \, \lfloor \frac{2n}{k} \rfloor}{\lfloor \frac{n}{r} \rfloor} \leq
\frac{\lfloor\frac{n}{tr} \rfloor}{\lfloor \frac{n}{r} \rfloor} \leq \frac{1}{t} .$$
Thus $p_n(u_{n,k} u_{n,r} - u_{n,r}) \leq \frac{2}{t}$ for $k \geq 2 t r$, so
$$p(u_{k} u_r - u_r) = \sup_n \, p_n(u_{n,k} u_{n,r} - u_{n,r}) \leq \frac{2}{t}
, \qquad k \geq 2 t r .$$
So $u_k u_r \to u_r$ with $k$, and so $u_r \in A$ for all $r  \in \Ndb$.

It is now obvious that $A$, being a subalgebra of the operator algebra $N$,
is an operator algebra with cai $(u_k)$.
It is elementary that for any matrix $x$ in
the copy $M_n'$ of $M_n$ in $A_0$ we have $x u_k \to x$ and $u_k x \to x$,
since for example $u_k x = x$ for $k > n$.
 Hence $A_0 \subset A$, so that  $A$ is semisimple by Lemma \ref{lem1}.
Since $M_n'$ is an ideal in $N$, so is $A_0$, giving the last statement.
  \end{proof}

In the following result, and elsewhere, $\Vert \cdot \Vert$ denotes
the usual norm on $M_n$ or on $\oplus^\infty_n \, M_n$.
\begin{lemma} \label{lem3}  For each $n \in \Ndb$ and $k \leq n$,
we have  $\Vert u_{n,k} \Vert \geq 1 - \frac{2}{k}$ and
$\Vert u_{n,k}^3 \Vert \geq 1 - \frac{6}{k}$.
\end{lemma}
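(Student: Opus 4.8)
The plan is to exhibit, for each case, an explicit unit vector on which the operator acts with norm close to $1$, and to read the norm off directly. The right test vector is the ``flat'' vector $v = \frac{1}{\sqrt n}\sum_{j=1}^n e_j$, which satisfies $\Vert v\Vert = 1$. A single basis vector such as $e_n$ will not serve, since the distinct powers $(L_n)^i$ carry it to distinct \emph{orthogonal} basis vectors, so that its image has norm only $\mu_{n,k}^{-1/2}$; one needs a spread-out vector so that the shifts overlap constructively and the low-index coordinates accumulate the full weight of the coefficients.

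First I would record the action of the shift on $v$: since $(L_n)^i e_j = e_{j-i}$ when $j > i$ and is $0$ otherwise, we have $(L_n)^i v = \frac{1}{\sqrt n}\sum_{l=1}^{n-i} e_l$. Consequently, for any polynomial $q(L_n) = \sum_m a_m (L_n)^m$ whose coefficients are nonnegative, sum to $1$, and are supported in $[0,D]$, the coefficient of $e_l$ in $q(L_n)v$ equals $\frac{1}{\sqrt n}\sum_{m \leq n-l} a_m$, which is exactly $\frac{1}{\sqrt n}$ whenever $l \leq n-D$ (every surviving exponent then contributes its full weight). Since the $e_l$ are orthonormal, this yields the general estimate
$$\Vert q(L_n)\Vert \geq \Vert q(L_n)v\Vert \geq \Big( \sum_{l=1}^{n-D} \tfrac{1}{n}\Big)^{1/2} = \Big(\frac{n-D}{n}\Big)^{1/2}.$$

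I would then apply this twice. For $u_{n,k} = \frac{1}{\mu_{n,k}}\sum_{i \in E_{n,k}} (L_n)^i$ the coefficients are nonnegative, sum to $1$, and are supported in $[0, \lfloor 2n/k \rfloor]$, so $D = \lfloor 2n/k\rfloor \leq 2n/k$ and $\frac{n-D}{n} \geq 1 - \frac{2}{k}$; hence $\Vert u_{n,k}\Vert \geq (1-\frac{2}{k})^{1/2} \geq 1 - \frac{2}{k}$, using $\sqrt{t} \geq t$ for $t \in [0,1]$ (the bound being vacuous when $1-\frac2k < 0$). Expanding $u_{n,k}^3 = \frac{1}{\mu_{n,k}^3}\sum_{(i_1,i_2,i_3) \in E_{n,k}^3} (L_n)^{i_1+i_2+i_3}$ shows its coefficients are again nonnegative, sum to $1$, and are supported in $[0, 3\lfloor 2n/k\rfloor]$, giving $D = 3\lfloor 2n/k\rfloor \leq 6n/k$ and $\Vert u_{n,k}^3\Vert \geq (1-\frac{6}{k})^{1/2} \geq 1 - \frac{6}{k}$.

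I do not expect a genuine obstacle; the only point requiring care is the bookkeeping of which coordinates receive the ``full'' coefficient. One must check that for $l \leq n-D$ every surviving exponent $m$ satisfies $m \leq n-l$, so that no term is killed by the nilpotency $(L_n)^n = 0$ (this is automatic precisely in the nontrivial range $D < n$, i.e.\ $k > 2$ and $k > 6$ respectively, the complementary range making the asserted bound negative and hence trivial), and that the $n - D$ retained indices number at least $n(1 - \frac{2}{k})$ (resp.\ $n(1 - \frac{6}{k})$) after discarding the floor via $\lfloor 2n/k\rfloor \leq 2n/k$. The substance of the argument lies entirely in the choice of the flat test vector $v$, which converts the positive, mass-one coefficients of $u_{n,k}$ and $u_{n,k}^3$ into a block of leading coordinates each of size $\frac{1}{\sqrt n}$.
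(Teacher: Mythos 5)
Your proof is correct and uses the same key idea as the paper: testing against the flat unit vector $\frac{1}{\sqrt n}(1,\dots,1)$, for which the low powers of $L_n$ overlap constructively. The only (cosmetic) difference is that the paper bounds the quadratic form $\langle u_{n,k}\eta,\eta\rangle \geq 1-\frac{2}{k}$ directly via $\langle (L_n)^j\eta,\eta\rangle = 1-\frac{j}{n}$, while you bound the norm of the image vector and then invoke $\sqrt{t}\geq t$; both yield the stated estimates.
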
 \begin{proof}
 If $\eta$ is the unit vector $(\frac{1}{\sqrt{n}}, \cdots , \frac{1}{\sqrt{n}})$ in $\Cdb^n$, then it is easy to see that
$$\langle (L_n)^k \eta, \eta
\rangle = 1 - \frac{k}{n} , \qquad 0 \leq k \leq n. $$
Since $u_{n,k}$ is an average of powers $(L_n)^j$ with
$0 \leq j \leq \frac{2n}{k}$, we have
$$\langle u_{n,k} \eta, \eta
\rangle \geq 1 - \frac{\frac{2n}{k}}{n} = 1 - \frac{2}{k} .$$
Similarly, $u_{n,k}^3$ is a weighted average of powers $(L_n)^j$ with
$0 \leq j \leq \frac{6n}{k}$.
 \end{proof}

We note that the diagonal matrix units $e^n_{i,i}$ are orthogonal projections, and are
also minimal idempotents in $A$ (that is,
have the property that
$e A e = \Cdb e$).

\begin{theorem} \label{nowk}  $A$ is not weakly compact, and is not separable.
\end{theorem}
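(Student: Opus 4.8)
The plan is to prove both assertions by exhibiting quantitative obstructions arising from the norm $p$ and the averaging structure of the cai. I will recall that weak compactness of $A$ is equivalent, by \cite[Lemma 5.1]{ABR}, to $A$ being a hereditary subalgebra of its bidual; but rather than work abstractly I expect it is cleaner to argue directly from the definition that some multiplication operator $M_{a,a}\colon x\mapsto axa$ fails to be weakly compact, or equivalently to exploit the characterization that for an approximately unital commutative algebra weak compactness forces $A$ to be an ideal in $A^{**}$. Since $A$ here is noncommutative, I would instead test weak compactness on the operators $u_{n,k}$ themselves: the key point is that $\Vert u_{n,k}\Vert$ and $\Vert u_{n,k}^3\Vert$ stay bounded below uniformly in $n$ by Lemma~\ref{lem3} (namely $\geq 1-\tfrac{2}{k}$ and $\geq 1-\tfrac{6}{k}$), which says the cai does not act compactly.

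Concretely, first I would fix a large $k$ and consider the elements $a_n \in A_0 \subset A$ supported in the $n$th block and equal to $u_{n,k}$ there (times a suitable normalization). Because the $M_n$-summands are mutually orthogonal in $N$, the sequence $(a_n)_n$ is bounded in $p$ but has no weakly convergent subsequence whose limit lies in $A$: the vectors $\eta = (\tfrac{1}{\sqrt n},\dots,\tfrac{1}{\sqrt n})$ from Lemma~\ref{lem3} furnish, via the functionals $x \mapsto \langle x\eta,\eta\rangle$, a uniformly separated family detecting the blocks. I would then feed this into the definition of weak compactness of a multiplication operator $M_{a,a}$ for a well-chosen $a$, perhaps $a = u_r$ for fixed $r$, and show that $M_{a,a}$ maps the bounded set $\{a_n\}$ to a set with no weakly convergent subsequence in $A$; the lower bounds on $\Vert u_{n,k}^3\Vert$ are exactly what guarantee that the images $a\,a_n\,a$ do not shrink to zero across the infinitely many orthogonal blocks. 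This block-orthogonality combined with the uniform lower bound is the engine of the non-weak-compactness.

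For non-separability, the plan is to produce an uncountable family in $A$ that is $p$-separated by a fixed constant. The natural candidates are indexed by subsets $S \subseteq \Ndb$: for each $S$ I would attempt to form an element $a_S \in N$ which, block by block, interpolates between $I_n$ (or $u_{n,k}$) on blocks with $n \in S$ and $0$ on blocks with $n \notin S$, arranged so that $a_S \in A$ (i.e.\ satisfies $p(a_S u_k - a_S) + p(u_k a_S - a_S) \to 0$). For two distinct subsets $S \neq T$, the difference $a_S - a_T$ has a block where one summand is near the identity and the other is zero, so $p(a_S - a_T)$ is bounded below, yielding uncountably many elements pairwise separated in norm and hence non-separability. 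The main subtlety is ensuring each $a_S$ genuinely lies in $A$ rather than merely in $N$; this is where I would need the convergence rate estimates from Lemma~\ref{lem2} and Corollary~\ref{isab}, choosing the block-values to interact correctly with the averages $u_k$.

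The main obstacle I anticipate is the non-separability construction, specifically verifying the membership condition $p(a_S u_k - a_S) + p(u_k a_S - a_S) \to 0$ for an uncountable family simultaneously and uniformly enough. A crude choice such as $a_S = $ projection onto the blocks in $S$ almost certainly fails to lie in $A$, since the raw identities $I_n$ do not satisfy the cai convergence. I expect the resolution is to use tail-vanishing weights or to take $a_S$ supported on a sparse subsequence of blocks engineered so that the left/right multiplication defect is controlled uniformly across all $S$; the orthogonality of the blocks should then make the $p$-separation of distinct $a_S$ automatic, so that the real work is entirely in the membership verification.
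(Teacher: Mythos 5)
Your plan for non-weak-compactness has a genuine gap: the test family you propose does not witness any failure of weak compactness. The elements $a_n$ you describe (each supported in the single $n$th block, equal to $u_{n,k}$ there) all lie in $A_0$, which is a $c_0$-sum of matrix blocks, and such a block-disjoint bounded sequence converges \emph{weakly to zero} --- exactly like the standard basis of $c_0$. The functionals $x\mapsto\langle x\eta,\eta\rangle$ do separate the blocks uniformly, but uniform separation is no obstruction to weak convergence (again, the $c_0$ basis is $1$-separated and weakly null). Consequently $M_{a,a}$ maps your set $\{a_n\}$ to a weakly null, hence relatively weakly compact, sequence, and the lower bounds $\Vert u_{n,k}^3\Vert\geq 1-\tfrac{6}{k}$ from Lemma~\ref{lem3} give you nothing. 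The essential missing idea is that one must use elements with \emph{non-vanishing tails}: the paper observes that $A$ is an $\ell^\infty$-bimodule via $(\alpha_n)\cdot(T_n)=(\alpha_n T_n)$, that $\ell^\infty\cdot u_r\subset A$ (membership is the one-line estimate $p(\vec\alpha\cdot u_r\, u_k-\vec\alpha\cdot u_r)\leq\Vert\vec\alpha\Vert_\infty\, p(u_r u_k-u_r)\to 0$), and that $\vec\alpha\mapsto\vec\alpha\cdot u_r^3$ is a bicontinuous embedding of all of $\ell^\infty$ into $u_r A u_r$, precisely because $\Vert u_{n,r}^3\Vert\geq\tfrac12$ uniformly in $n$ for $r$ large. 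Since the unit ball of $\ell^\infty$ is a bounded convex set that is not weakly compact, $M_{u_r,u_r}$ is not weakly compact. So Lemma~\ref{lem3} is indeed the engine, but only after being coupled to the $\ell^\infty$ module action rather than to a weakly null block sequence.

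Your non-separability plan is essentially sound and is in fact the indicator-function special case of the paper's embedding: taking $a_S=\chi_S\cdot u_k$ for $S\subseteq\Ndb$ gives $a_S\in A$ by the same contractive-module-action estimate above (your instinct that the raw block identities $I_n$ fail is correct --- that element is $1_N\notin A$ --- and your alternative choice $u_{n,k}$ on the blocks of $S$ is exactly the right fix), and $p(a_S-a_T)\geq\Vert u_{n,k}\Vert\geq 1-\tfrac2k$ for any $n\in S\,\triangle\, T$ by Lemma~\ref{lem3}. The membership verification you flag as the main subtlety requires no sparseness or tail-vanishing weights; it is uniform over all $S$ for free. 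Once you see that the whole of $\ell^\infty$, not just its idempotents, acts on $A$, both conclusions follow from the single embedding $\ell^\infty\hookrightarrow u_rAu_r$.
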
 \begin{proof}  Note that $A$ is an $\ell^\infty$-bimodule
via the action $$(\alpha_n) \cdot (T_n) =  (T_n)  \cdot (\alpha_n)
= (\alpha_n T_n) , \qquad (\alpha_n) \in \ell^\infty,
(T_n) \in A .$$
We will use this to embed $\ell^\infty$ isomorphically in $xAx$, where
$x = u_{r}$ for large enough $r$.   Note that
$$\ell^\infty \cdot x^3 = x (\ell^\infty \cdot x)x \subset xAx .$$
Choosing $r$ with
$1 - \frac{6n}{r} \geq \frac{1}{2}$, we have  that
$\Vert u_{n,r}^3 \Vert \geq \frac{1}{2}$ for all $n \in \Ndb$
(recall $u_{n,r} = I$ if $n < r$).  Thus  for $\vec \alpha = (\alpha_n) \in \ell^\infty$ we have
 $$p(\vec \alpha \cdot x^3) \geq \Vert \vec \alpha \cdot x^3 \Vert =  \Vert \vec \alpha \cdot u_{r}^3 \Vert = \sup_n \, |\alpha_n| \Vert u_{n,r}^3 \Vert \geq \frac{1}{2} \sup_n \, |\alpha_n|,$$
and so the map $\vec \alpha  \mapsto \vec \alpha \cdot x^3$ is a bicontinuous injection of
$\ell^\infty$  into $xAx$.  Thus  $A$ is not weakly compact, nor separable.
\end{proof}

\begin{lemma} \label{lem4}  If $T = (T_n) \in A$, then $\Vert d_n T_n d_n^{-1} \Vert \to 0$ as $n \to \infty$.
Thus the spectral radius $r(T_n) \to 0$  as $n \to \infty$.
\end{lemma}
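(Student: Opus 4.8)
The plan is to exploit the defining condition of $A$ to force the conjugated blocks $d_n T_n d_n^{-1}$ to vanish as $n \to \infty$. First note that since $T = (T_n) \in N$ we have the uniform bound $\Vert d_n T_n d_n^{-1} \Vert \leq p_n(T_n) \leq p(T) < \infty$ for all $n$. Next, the membership $T \in A$ gives $p(u_k T - T) \to 0$ as $k \to \infty$; and since $p_n(\cdot) \geq \Vert d_n (\cdot) d_n^{-1} \Vert$, a triangle-inequality argument (splitting $d_n T_n d_n^{-1} = d_n u_{n,k} T_n d_n^{-1} + d_n(T_n - u_{n,k} T_n) d_n^{-1}$ and using submultiplicativity) yields, for every $n$ and $k$,
$$ \Vert d_n T_n d_n^{-1} \Vert \leq \Vert d_n u_{n,k} d_n^{-1} \Vert \, \Vert d_n T_n d_n^{-1} \Vert + p(u_k T - T) . $$
The idea is that, after conjugation by $d_n$, the approximate identity $u_{n,k}$ becomes negligibly small once $n$ is large relative to $k$, which makes the first term on the right controllable.

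The key estimate is therefore a bound on $\Vert d_n u_{n,k} d_n^{-1} \Vert$ when $n \geq k$. Using $d_n L_n d_n^{-1} = \frac{1}{c_n} L_n$ we get $d_n (L_n)^i d_n^{-1} = c_n^{-i} (L_n)^i$, whence
$$ d_n u_{n,k} d_n^{-1} = \frac{1}{\mu_{n,k}} \sum_{i \in E_{n,k}} c_n^{-i} (L_n)^i . $$
Every exponent $i \in E_{n,k}$ satisfies $i \geq \frac{n}{k}$, and $\Vert (L_n)^i \Vert \leq 1$, so for all $n$ large enough that $c_n \geq 1$ we obtain
$$ \Vert d_n u_{n,k} d_n^{-1} \Vert \leq \frac{1}{\mu_{n,k}} \sum_{i \in E_{n,k}} c_n^{-i} \leq c_n^{-n/k} . $$
Since $(c_n)$ is unbounded and increasing, for each fixed $k$ we have $c_n^{-n/k} \to 0$ as $n \to \infty$.

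Combining the two displays with the uniform bound $\Vert d_n T_n d_n^{-1} \Vert \leq p(T)$ on the second factor gives, for each fixed $k$ and all large $n$,
$$ \Vert d_n T_n d_n^{-1} \Vert \leq p(T) \, c_n^{-n/k} + p(u_k T - T) . $$
To finish, given $\epsilon > 0$ I would first choose $k$ with $p(u_k T - T) < \frac{\epsilon}{2}$, and then observe that $p(T) \, c_n^{-n/k} < \frac{\epsilon}{2}$ for all sufficiently large $n$, so that $\limsup_n \Vert d_n T_n d_n^{-1} \Vert \leq \epsilon$; letting $\epsilon \to 0$ yields $\Vert d_n T_n d_n^{-1} \Vert \to 0$. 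The one point needing care — and the expected main obstacle — is the order of limits: the estimate $c_n^{-n/k}$ decays only after $k$ is frozen and $n \to \infty$, so one must not send $k \to \infty$ first. The spectral-radius assertion is then immediate, since $T_n$ and $d_n T_n d_n^{-1}$ are similar via the invertible matrix $d_n$, so that $r(T_n) = r(d_n T_n d_n^{-1}) \leq \Vert d_n T_n d_n^{-1} \Vert \to 0$.
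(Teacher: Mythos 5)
Your proposal is correct and follows essentially the same route as the paper: both split $d_n T_n d_n^{-1}$ using the cai term $u_{n,k}$, bound the error by $p(u_k T - T)$, and exploit $d_n L_n^i d_n^{-1} = c_n^{-i} L_n^i$ to make the conjugated $u_{n,k}$ small for large $n$ (your bound $c_n^{-n/k}$ is a slightly sharper version of the paper's $c_n^{-1}$, and you multiply by $u_{n,k}$ on the left where the paper uses the right, which is immaterial given the two-sided definition of $A$). The explicit similarity argument for the spectral radius claim is also exactly what the paper intends.
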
 \begin{proof}
Given $\epsilon > 0$ there exists an  $m \in \Ndb$ such that
$$p_n(u_{n,m} T_n - T_n) + p_n(T _n u_{n,m}  - T_n)  < \frac{\epsilon}{2} p(u_m T - T) + p(T u_m  - T)  < \frac{\epsilon}{2} ,
n \in \Ndb. $$ We have noted that $d_n L_n d_n^{-1} = \frac{1}{c_n} L_n,$ and for $n \geq m$ the operator
$u_{n,m}$ is an average of powers $L^j_n$, so for $n \geq m$ we have
$$\Vert d_n u_{n,m} d_n^{-1} \Vert \leq \max_{j \in \Ndb} \, \Vert d_n L_n^j  d_n^{-1} \Vert \leq  \frac{1}{c_n} .$$
Thus $$\Vert d_n T_n u_{n,m}  d_n^{-1} \Vert \leq \frac{1}{c_n}  \Vert d_n T_n d_n^{-1} \Vert \leq \frac{1}{c_n}  p(T) .$$
Consequently,  for $n \geq m$ the  quantity $\Vert d_n T_n d_n^{-1} \Vert$ is dominated by
$$\Vert d_n (T_n u_{n,m} - T_n) d_n^{-1} \Vert + \Vert d_n T_n u_{n,m} d_n^{-1} \Vert \leq
p_n(T_n u_{n,m} - T_n)  + \frac{1}{c_n}  p(T) \leq \frac{\epsilon}{2}  + \frac{1}{c_n}  p(T).$$
The result is clear from this.
\end{proof}

For a matrix $B$ write $\overline{\Delta}_U B$ for the upper triangular projection of $B$ (that is, we change $b_{ij}$ to $0$ if
$i > j$).  Similarly, write $\Delta_L B$ for the strictly lower triangular part of $B$.
In the next results, as usual ${r \choose s} = 0$ if $0 \leq r < s$ are integers.

\begin{lemma} \label{lem5}  If $0 \neq T = (T_n) \in A$, and $\epsilon > 0$ is given, there exist
$k, m \in \Ndb$ such that for all $r \in \Ndb_0$ and $n \geq \max \{ k, m \}$, we have
$$\Vert (\overline{\Delta}_U T_n)^r \Vert \leq \sum_{s=0}^{k-1} \, {r \choose s} \, (2 p(T))^r \, \epsilon^{r-s} .$$
\end{lemma}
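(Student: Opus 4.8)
The first thing I would do is read the right-hand side as a truncated binomial expansion. Since $(2p(T))^r\,\epsilon^{r-s}=(2p(T))^s\,(2p(T)\epsilon)^{r-s}$, the claimed bound is exactly $\sum_{s=0}^{k-1}{r\choose s}\,(2p(T))^s\,(2p(T)\epsilon)^{r-s}$, i.e.\ the truncation at $s<k$ of $(Q+P)^r$ for two quantities with $\Vert Q\Vert\le 2p(T)$ and $\Vert P\Vert\le 2p(T)\epsilon$. So I would aim for a splitting $\overline{\Delta}_U T_n=Q_n+P_n$ for which (i) $\Vert Q_n\Vert\le 2p(T)$, (ii) $\Vert P_n\Vert\le 2p(T)\epsilon$, and (iii) every ordered product of $r$ factors, each equal to $Q_n$ or $P_n$, containing at least $k$ factors $Q_n$ vanishes. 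Granting (i)--(iii), expanding $(\overline{\Delta}_U T_n)^r=(Q_n+P_n)^r$ and discarding the vanishing words leaves only the ${r\choose s}$ words with exactly $s<k$ factors $Q_n$; bounding each by $\Vert Q_n\Vert^s\Vert P_n\Vert^{r-s}$ and summing reproduces the statement verbatim.

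To build the splitting I would use that $T\in A$, i.e.\ the absorption $p(Tu_k-T)\to 0$, together with the shift structure of the $u_{n,k}$. Fix $k$ so that $p(Tu_k-T)<p(T)\epsilon$, and set $Q_n=(\overline{\Delta}_U T_n)\,u_{n,k}$ and $P_n=\overline{\Delta}_U T_n-Q_n$, so that $\overline{\Delta}_U T_n=Q_n+P_n$ trivially. For (iii), recall that for $n\ge k$ the element $u_{n,k}$ is an average of powers $L_n^i$ with $i\ge\lceil n/k\rceil$, hence is supported on superdiagonals $\ge\lceil n/k\rceil$; since every matrix in sight is upper triangular and superdiagonal supports add under multiplication, a word containing $k$ factors $Q_n$ is supported on superdiagonals $\ge k\lceil n/k\rceil\ge n$ and so is zero in $M_n$. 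This is precisely where $n\ge\max\{k,m\}$ enters, the second threshold $m$ being fixed below; note that for $n<k$ one has $u_{n,k}=I_n$ and the mechanism collapses.

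The crux, and the step I expect to be the main obstacle, is verifying the operator-norm bounds (i) and (ii), because $\overline{\Delta}_U$ is \emph{not} uniformly bounded in the dimension, so one cannot simply estimate the triangular truncation of a small matrix. I would instead control the discarded strictly lower part: writing $\overline{\Delta}_U T_n=T_n-\Delta_L T_n$, the key estimate is $\Vert\Delta_L T_n\Vert\le p(T)/(c_n-1)$, proved by a Schur/row-sum argument from the geometric decay $|(T_n)_{ab}|\le c_n^{-(a-b)}p(T)$ for $a>b$, which follows from $\Vert d_n T_n d_n^{-1}\Vert\le p(T)$ exactly as in Lemma \ref{lem4}. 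Granting this, $Q_n=T_n u_{n,k}-(\Delta_L T_n)u_{n,k}$ together with $\Vert u_{n,k}\Vert\le 1$ gives $\Vert Q_n\Vert\le p(T)+p(T)/(c_n-1)\le 2p(T)$ as soon as $c_n\ge 2$, while $P_n=(T_n-T_n u_{n,k})-(\Delta_L T_n)(I_n-u_{n,k})$ gives $\Vert P_n\Vert\le p(Tu_k-T)+2p(T)/(c_n-1)$. Choosing $k$ as above and then a threshold $m$ so that $c_n\ge 2$ and $2p(T)/(c_n-1)<p(T)\epsilon$ for all $n\ge m$ yields $\Vert P_n\Vert\le 2p(T)\epsilon$, so (i)--(iii) all hold for $n\ge\max\{k,m\}$ and the estimate follows.
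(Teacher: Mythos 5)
Your proof is correct and takes essentially the same route as the paper's: the same splitting of $\overline{\Delta}_U T_n$ into a piece absorbed by $u_{n,k}$ plus a small remainder (you multiply by $u_{n,k}$ on the right where the paper multiplies on the left, which is immaterial since $T\in A$ gives two-sided absorption), the same key estimate $\Vert\Delta_L T_n\Vert\le p(T)/(c_n-1)$ from the geometric decay of subdiagonal entries, and the same superdiagonal-counting argument showing that any word with $k$ or more $u_{n,k}$-factors vanishes in $M_n$.
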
 \begin{proof}
The $i$-$j$ entry $T_{n,i,j}$ of $T_n$ equals
$\langle T_n e_j , e_i \rangle = c_n^{j-i} \, \langle  d_n T_n d_n^{-1} e_j , e_i \rangle$, and so
$$|T_{n,i,j} | = c_n^{j-i} \, |\langle  d_n T_n d_n^{-1} e_j , e_i \rangle|  \leq c_n^{j-i} \, p_n(T_n) , \qquad  T = (T_n) \in A.$$
It follows from this that
$$\Vert \sum_{j = 1}^{n-r} \, T_{n,j+r,j} \, E_{j+r,j} \Vert = \max_{j \leq n-r} \, |T_{n,j+r,j} | \leq c_n^{-r} \, p_n(T_n) ,$$ if $r < n$.
Since $\sum_{r =1}^{n-1} \, (\sum_{j = 1}^{n-r} \, T_{n,j+r,j} \, E_{j+r,j}) = \Delta_L T_n$, we deduce that
\begin{equation} \label{lse} \Vert \Delta_L T_n \Vert  = \Vert T_n -  \overline{\Delta}_U T_n 
\Vert \leq \sum_{r =1}^{n-1} \, c_n^{-r} \, p_n(T_n) \leq \frac{p_n(T_n)}{c_n - 1} \leq
\frac{p(T)}{c_n - 1}. \end{equation}

Given $\epsilon > 0$ choose $k$ with $p(u_k T - T) < \epsilon p(T),$  and let $n \geq k$.
 Then $$\Vert u_{n,k} T_n - T_n \Vert \leq p_n(u_{n,k} T_n - T_n) < \epsilon p(T),$$ and so
$$\Vert u_{n,k} \overline{\Delta}_U T_n - \overline{\Delta}_U T_n \Vert  \leq \epsilon p(T) + \Vert (u_{n,k} - I) (T_n -  \overline{\Delta}_U T_n) \Vert \leq p(T) (\epsilon + \frac{2}{c_n - 1}),$$
since $$u_{n,k} \overline{\Delta}_U T_n - \overline{\Delta}_U T_n = (I - u_{n,k}) (T_n -  \overline{\Delta}_U T_n)  + (u_{n,k} T_n - T_n) .$$
Let $S_1 = u_{n,k} \overline{\Delta}_U T_n$ and $S_2 = \overline{\Delta}_U T_n - S_1$, then
$\Vert S_2 \Vert \leq p(T) (\epsilon + \frac{2}{c_n - 1}),$ by the last displayed equation.   Also,
$$\Vert S_1 \Vert \leq \Vert \overline{\Delta}_U T_n \Vert \leq p(T) + \Vert (I - \overline{\Delta}_U) T_n \Vert \leq p(T) + \frac{p(T)}{c_n - 1}
= p(T) \frac{c_n}{c_n - 1}$$
by (\ref{lse}).

Now $\overline{\Delta}_U T_n = S_1 + S_2$,
so $(\overline{\Delta}_U T_n)^r$ is a sum from $s = 0$ to $r$, of ${r \choose s}$ times terms which are a product of $r$ factors,
$s$ of which are $S_1$ and $r-s$ of which are $S_2$.    Note that any product of upper triangular $n \times n$ matrices that has $k$ or more
factors which equal $S_1$, is zero.  This is because multiplication of an upper triangular matrix $U$ by $u_{n,k}$ (and hence by $S_1$)
decreases the
number of nonzero `superdiagonals' of $B$ by a number
$\geq \frac{n}{k}$, so after $k$ such multiplications we are left with the zero matrix.   Thus we can assume that $s < k$ above.
Using the estimates at the end of the last paragraph, we deduce that
$$\Vert (S_1 + S_2)^r \Vert \leq \sum_{s=0}^{k-1} \, {r \choose s} \Vert S_1 \Vert^s \Vert S_2 \Vert^{r-s} \leq
\sum_{s=0}^{k-1} \, {r \choose s} \, (p(T) \frac{c_n}{c_n - 1})^s \, (p(T) (\epsilon + \frac{2}{c_n - 1}))^{r-s}. $$
Since $c_n \to \infty$ we may choose $m$ such that $\frac{c_n}{c_n - 1} < 2$ and $\epsilon + \frac{2}{c_n - 1} < 2 \epsilon$ for
all $n \geq m$.  Thus for $n \geq \max \{ k, m \}$, we have
$$\Vert (\overline{\Delta}_U T_n)^r  \Vert = \Vert (S_1 + S_2)^r \Vert \leq \sum_{s=0}^{k-1} \, {r \choose s} \,  (2 p(T))^r \epsilon^{r-s}$$
as desired.
\end{proof}

For $k \in \Ndb$ and
positive numbers $b, \epsilon$,
define a quantity  $K(k,b,\epsilon) = \frac{1}{2 b
(1- \epsilon) \,  \epsilon^k}$.

\begin{lemma} \label{lem6}  If $0 \neq T = (T_n) \in A$, and $\epsilon > 0$ is given, there exist
$k, m \in \Ndb$ such that for all $\lambda \in \Cdb$ with $|\lambda| > 4 p(T) \epsilon$,
and $n \geq \max \{ k, m \}$, we have $\lambda I - \overline{\Delta}_U T_n$ and $\lambda I - T_n$ invertible in $M_n$, and both
$$\Vert (\lambda I - \overline{\Delta}_U T_n)^{-1} \Vert \leq K(k,
p(T), \epsilon)$$ and $$\Vert  (\lambda I - T_n)^{-1} \Vert \leq 2 K(k,
p(T), \epsilon).$$
\end{lemma}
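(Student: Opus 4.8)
The plan is to build the resolvent of $\overline{\Delta}_U T_n$ directly from its Neumann series and feed in the power estimate of Lemma \ref{lem5}. I would fix the same $k$ produced by Lemma \ref{lem5} (so that its conclusion is available for $n \geq \max\{k,m\}$), assume without loss that $0 < \epsilon < 1$ (otherwise $K(k,p(T),\epsilon)$ is not even positive and there is nothing to prove), and abbreviate $M = 2 p(T)$ and $\lambda_0 = |\lambda|$, so the hypothesis reads $\lambda_0 > 2 M \epsilon$. First I would observe that the bound $\Vert (\overline{\Delta}_U T_n)^r \Vert \leq \sum_{s=0}^{k-1} {r \choose s} M^r \epsilon^{r-s}$ forces the spectral radius of $\overline{\Delta}_U T_n$ to be at most $M \epsilon < \lambda_0$, so that $\lambda I - \overline{\Delta}_U T_n$ is invertible and equals the absolutely convergent series $\sum_{r \geq 0} (\overline{\Delta}_U T_n)^r / \lambda^{r+1}$.

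To estimate its norm I would insert the Lemma \ref{lem5} bound, interchange the (absolutely convergent) sums over $r$ and $s$, and evaluate the inner sum by the standard identity $\sum_{r \geq 0} {r \choose s} q^r = q^s/(1-q)^{s+1}$, valid for $|q| < 1$, with $q = M\epsilon/\lambda_0 < \frac{1}{2}$. A short calculation then collapses the double sum to
$$\Vert (\lambda I - \overline{\Delta}_U T_n)^{-1} \Vert \leq \frac{1}{\lambda_0 (1-q)} \sum_{s=0}^{k-1} \rho^s, \qquad \rho = \frac{M}{\lambda_0 (1-q)} .$$
Since increasing $\lambda_0$ decreases both $q$ and $\rho$, the right-hand side is largest at the boundary $\lambda_0 = 2M\epsilon$, where $q = \frac{1}{2}$ and $\rho = 1/\epsilon$; there the finite geometric sum evaluates to exactly $(\epsilon^{-k}-1)/(M(1-\epsilon)) \leq K(k, p(T), \epsilon)$. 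This yields the first inequality for every $\lambda$ with $\lambda_0 > 2M\epsilon$.

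For the second inequality I would pass from $\overline{\Delta}_U T_n$ to $T_n$ by a perturbation argument, using the estimate $\Vert \Delta_L T_n \Vert = \Vert T_n - \overline{\Delta}_U T_n \Vert \leq p(T)/(c_n-1)$ from (\ref{lse}). Writing $\lambda I - T_n = (\lambda I - \overline{\Delta}_U T_n)\bigl(I - (\lambda I - \overline{\Delta}_U T_n)^{-1} \Delta_L T_n\bigr)$, the bound just established gives $\Vert (\lambda I - \overline{\Delta}_U T_n)^{-1} \Delta_L T_n \Vert \leq K \, p(T)/(c_n-1)$. Since $c_n \to \infty$ and $k$ (hence $K$) is now fixed, I would enlarge $m$ if necessary so that this quantity is below $\frac{1}{2}$ for all $n \geq m$; then $I - (\lambda I - \overline{\Delta}_U T_n)^{-1}\Delta_L T_n$ is invertible with inverse of norm at most $2$, whence $\lambda I - T_n$ is invertible and $\Vert (\lambda I - T_n)^{-1}\Vert \leq 2 K$.

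I expect the main obstacle to be the bookkeeping in the second paragraph: arranging the interchange of summation and the generating-function identity so that they telescope cleanly into the stated constant $K(k,p(T),\epsilon)$, and in particular verifying the monotonicity in $\lambda_0$ that lets one reduce to the boundary case rather than carrying a general $\lambda$ through the whole estimate. The order of the quantifiers also needs care, since $k$ must be the one delivered by Lemma \ref{lem5} while $m$ may have to be taken strictly larger than there in order to absorb the $p(T)/(c_n-1)$ factor in the final perturbation step.
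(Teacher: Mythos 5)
Your proposal is correct and follows essentially the same route as the paper: the Neumann series for $\lambda I - \overline{\Delta}_U T_n$ estimated via Lemma \ref{lem5} and the negative binomial identity, followed by a Neumann-series perturbation using (\ref{lse}) and an enlargement of $m$ so that $p(T)/(c_n-1) < 1/(2K)$. The only cosmetic differences are that you justify convergence via a spectral radius bound and optimize the double sum by monotonicity in $|\lambda|$ rather than by termwise substitution of the bounds $|\lambda|^{-1} < (4p(T)\epsilon)^{-1}$ and $(1-q)^{-s-1} < 2^{s+1}$, which lead to the same constant $K(k,p(T),\epsilon)$.
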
 \begin{proof} If  $|\lambda| > 2 p(T) \epsilon$ then  $$\sum_{r=0}^{\infty} \, \Vert \lambda^{-r-1} \,  (\overline{\Delta}_U T_n)^r
\Vert \leq |\lambda|^{-1} \,  \sum_{r=0}^{\infty} \,  \sum_{s=0}^{k-1} \, {r \choose s} \, (\frac{2 p(T)}{|\lambda|})^r \, \epsilon^{r-s} ,$$
by Lemma \ref{lem5}, for $n \geq \max \{ k, m \}$, where $k, m$ are as in that lemma.   However the latter quantity equals
$$|\lambda|^{-1} \,  \sum_{s=0}^{k-1} \, \sum_{r=0}^{\infty} \,   {r \choose s} \, (\frac{2 p(T) \epsilon}{|\lambda|})^{r-s} \,
(\frac{2 p(T)}{|\lambda|})^{s} = |\lambda|^{-1} \,  \sum_{s=0}^{k-1} \,
(\frac{2 p(T)}{|\lambda|})^{s}  \, (1 - \frac{2 p(T) \epsilon}{|\lambda|})^{-s-1}$$
using the binomial formula.   This is finite, so $\sum_{r=0}^{\infty} \,  \lambda^{-r-1} \,  (\overline{\Delta}_U T_n)^r$ converges,
and this  is clearly an inverse for  $\lambda I - \overline{\Delta}_U T_n$.
If $|\lambda| > 4 p(T) \epsilon$, then the sum in the
last displayed equation is dominated by
$$\frac{1}{4 p(T) \epsilon}  \,  \sum_{s=0}^{k-1} \,
(\frac{1}{2  \epsilon})^s \, 2^{s+1} =
\frac{1}{2 p(T) (1 -  \epsilon)} \frac{1 - \epsilon^k}{\epsilon^k}
\leq K(k,p(T),\epsilon) .$$
 We also obtain

\begin{equation} \label{deleq} \Vert (\lambda I - \overline{\Delta}_U T_n)^{-1} \Vert
\leq K(k,p(T), \epsilon). \end{equation}

By increasing $m$ if necessary, we can assume that $c_n - 1 > 2 \, p(T) \, K(k,
p(T), \epsilon)$.   Then by (\ref{lse}) we have
$$\Vert T_n - \overline{\Delta}_U T_n \Vert \leq \frac{p(T)}{c_n - 1} < \frac{1}{2 K(k,
p(T), \epsilon)}.$$
A simple consequence of the Neumann lemma is that if $R$ is invertible and $\Vert H \Vert < \frac{1}{2  \Vert R^{-1} \Vert}$,
then $R + H$ is invertible and $\Vert (R + H)^{-1} \Vert \leq 2 \Vert R^{-1} \Vert$.  Setting $R = \lambda I - \overline{\Delta}_U T_n$ and
$H =  \overline{\Delta}_U T_n - T_n$, we have $$\Vert H \Vert < \frac{1}{2 K(k,
p(T), \epsilon)} < \frac{1}{2  \Vert R^{-1} \Vert}$$  by (\ref{deleq}).   Hence $R + H = \lambda I - T_n$ is
invertible, and by (\ref{deleq}) again the norm of its inverse is dominated by $2 \Vert R^{-1} \Vert \leq 2
K(k, p(T), \epsilon).$
 \end{proof}

The quantity $K(k,p(T), \epsilon)$ above is independent of $n$, which gives:

\begin{corollary} \label{isnotma}     The spectrum of every element of $A$  is finite
or a null sequence and zero.   Hence $A$ is a modular annihilator algebra.
\end{corollary}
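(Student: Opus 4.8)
The plan is to show that for every $\delta > 0$ the portion of the spectrum of $T$ lying in $\{ \lambda \in \Cdb : |\lambda| \geq \delta \}$ is finite; granting this, the spectrum of any $T \in A$ is bounded (it lies in a disc of radius $p(T)$) and meets each annulus $|\lambda| \geq \delta$ in a finite set, so it can only be a finite set, or a sequence tending to $0$ together with the point $0$. Since $A$ is semisimple by Corollary \ref{isab}, no element then has a nonzero limit point in its spectrum, and the modular annihilator conclusion follows at once from the characterization of such algebras recalled in the Introduction. Throughout, the spectrum of $T \in A$ is computed in the unitization $A^1 = A + \Cdb I$, where $I = (I_n)$ is the unit of $N$ (note $I \notin A$); I shall compare invertibility in $A^1$ with invertibility in the unital algebra $N$.

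Fix $\delta > 0$ and choose $\epsilon > 0$ with $4 p(T) \epsilon < \delta$. Lemma \ref{lem6} supplies $k, m$ such that for all $n \geq \max \{ k, m \}$ and all $|\lambda| \geq \delta$ the matrix $\lambda I_n - T_n$ is invertible in $M_n$ with $\Vert (\lambda I_n - T_n)^{-1} \Vert \leq 2 K(k, p(T), \epsilon)$, a bound independent of both $n$ and $\lambda$, as emphasised in the remark preceding this corollary. To pass from the usual norm to the norm $p_n$ defining $N$, I also bound the conjugated resolvent: since $d_n (\lambda I_n - T_n)^{-1} d_n^{-1} = (\lambda I_n - d_n T_n d_n^{-1})^{-1}$ and $\Vert d_n T_n d_n^{-1} \Vert \to 0$ by Lemma \ref{lem4}, for $n$ large and $|\lambda| \geq \delta$ this inverse has norm at most $2/\delta$. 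Hence $p_n((\lambda I_n - T_n)^{-1})$ is bounded uniformly in $n$ and in $|\lambda| \geq \delta$ for all large $n$. For the remaining finitely many small indices $n$, the matrices $T_n$ are fixed finite matrices, so $\lambda I_n - T_n$ fails to be invertible only at their finitely many eigenvalues. Therefore $\lambda I - T = (\lambda I_n - T_n)$ is invertible in $N = \oplus^\infty_n \, (M_n, p_n)$ for every $\lambda$ with $|\lambda| \geq \delta$ outside a finite exceptional set.

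It remains to transfer this invertibility back to $A^1$. For $\lambda \neq 0$, suppose $\lambda I - T$ has inverse $S \in N$, and put $R = \lambda^{-1} S T = \lambda^{-1} T S \in N$, so that $S = \lambda^{-1} I + R$. Because $(u_k)$ is a cai for $A$ by Corollary \ref{isab}, we have $u_k T \to T$ and $T u_k \to T$ in the norm $p$; joint continuity of multiplication in $N$ then gives $u_k R = \lambda^{-1} (u_k T) S \to R$ and $R u_k = \lambda^{-1} S (T u_k) \to R$, whence $R \in A$ and $S \in A^1$. Thus for $\lambda \neq 0$, invertibility of $\lambda I - T$ in $N$ is equivalent to invertibility in $A^1$, and combining this with the previous paragraph shows that the spectrum of $T$ meets $\{ |\lambda| \geq \delta \}$ in a finite set for each $\delta > 0$, as required.

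The main obstacle is the interplay between the two norms and the two algebras. The resolvent bound of Lemma \ref{lem6} is in the operator norm, so one must separately control the $d_n$-conjugate of the resolvent, via Lemma \ref{lem4}, before invertibility in $N$, which carries the norm $p_n$, can be asserted uniformly in $n$. Moreover, since $A$ is not an ideal in $N$, the passage from invertibility in $N$ to invertibility in $A^1$ is not automatic, and it is exactly here that the approximate identity $(u_k)$ must be invoked.
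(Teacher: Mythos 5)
Your proposal is correct, and the core of the argument is the same as the paper's: you invoke Lemma \ref{lem6} for the uniform operator-norm bound on $(\lambda I_n - T_n)^{-1}$ for large $n$, control the conjugated resolvent $d_n(\lambda I_n - T_n)^{-1}d_n^{-1}$ via Lemma \ref{lem4} to get a uniform $p_n$-bound, and handle the finitely many small indices by excluding their finitely many eigenvalues; this yields invertibility of $\lambda I - T$ in $N$ off a finite set in each region $|\lambda| \geq \delta$. Where you genuinely diverge is the final transfer from $N$ to $A^1$: the paper simply cites the general spectral permanence principle that the boundary of the spectrum cannot increase on passing to a closed subalgebra (so a spectrum that is finite or a null sequence with $0$ in $N$ stays that way in $A^1$), whereas you show directly that the resolvent $S = \lambda^{-1}I + \lambda^{-1}ST$ actually lies in $A^1$, by using $u_kT \to T$ and $Tu_k \to T$ together with the definition of $A$ as the set of elements on which $(u_k)$ acts as an approximate identity. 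Your route is slightly longer but more self-contained and proves a bit more, namely that ${\rm Sp}_{A^1}(T)$ and ${\rm Sp}_N(T)$ agree off $0$; the paper's route is shorter but leans on a standard Banach-algebra fact. Both are sound.
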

\begin{proof}   Let $0 \neq T = (T_n) \in A$.  We will show that the spectrum of $T$ is finite
or a null sequence and zero.   It is sufficient to show that if $\epsilon > 0$ is given, there exists $m_0 \in \Ndb$
such that if $|\lambda| > 4 p(T) \epsilon$, and if $\lambda$ is not in the spectrum of $T_1, \cdots , T_{m_0}$,
then $\lambda \notin {\rm Sp}_A(T)$.   So assume these conditions, and let $m_0 = \max \{ k, m \}$ as in
Lemma \ref{lem6}.  For $n \geq m_0$ we have by Lemma \ref{lem6} that $\lambda I - T_n$ is invertible, and the usual matrix norm of its inverse
is bounded independently of $n$.
 By assumption this is also true for $n < m_0$.   By Lemma \ref{lem4} there is a $q$ such that
$\Vert d_n T_n d_n^{-1} \Vert < \epsilon$ for $n \geq q$.  If $|\lambda| > \epsilon$ then $(\lambda I - T_n)^{-1} =
\sum_{r=0}^{\infty} \,  \lambda^{-r-1} \,  T_n^r$ and
$$\Vert d_n (\lambda I - T_n)^{-1} d_n^{-1} \Vert =
\Vert \sum_{r=0}^{\infty} \,  \lambda^{-r-1} \,  d_n T_n^r d_n^{-1} \Vert \leq \sum_{r=0}^{\infty} \, |\lambda|^{-r-1} \, \epsilon^r
= |\lambda|^{-1} \, (1 - \frac{\epsilon}{|\lambda|})^{-1} .$$
Thus $(p_n((\lambda I - T_n)^{-1}))$ is bounded independently of $n$.   Hence  $((\lambda I - T_n)^{-1}) \in N$,
and this is an inverse in $N$ for $\lambda I - T$.  Thus the spectrum of $T$ in $N$ is finite 
or a null sequence and zero.    The  spectrum in $A$ might be bigger, but since the boundary of its spectrum
cannot increase, Sp$_A(T)$ is also finite
or a null sequence and zero.

The last statements follow from \cite[Chapter 8]{Pal}.  \end{proof}

We point out some more  features of our example $A$, in hope that these may further its future use as a counterexample in
the subject.

We recall that 
the multiplier algebra $M(A)$ of $A$ is identified with the
idealizer of $A$ in its bidual $A^{**}$ (that is, the set of elements
$\alpha\in A^{**}$ such that $\alpha A\subset A$ and $A
\alpha\subset A$).
 It can also be viewed as the
idealizer of $A$ in $B(H)$, if $A$ is represented nondegenerately and 
completely isometrically on a Hilbert space $H$.  See \cite[Section 
2.6]{BLM} for this.

\begin{proposition} \label{multis}  The multiplier algebra of $A$ may be taken to be
$\{ x \in N : x A + Ax \subset A \}$.  This is also valid with $N$ replaced by $M$.
\end{proposition}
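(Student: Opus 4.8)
The plan is to use the concrete faithful representation $A \subseteq N \subseteq M \subseteq B(H)$, where $H = \oplus_m (\Cdb^m \oplus \Cdb^m)$ is the Hilbert space direct sum on which $M$ acts, and to invoke the recalled fact that $M(A)$ equals the idealizer $\mathcal I_B := \{x \in B(H) : xA + Ax \subseteq A\}$ of $A$ in $B(H)$, once this representation is checked to be nondegenerate and completely isometric. Complete isometry is built into the identification $N \cong \oplus^\infty_m (M_m, p_m)$, and nondegeneracy is immediate since the diagonal matrix units $e^m_{i,i} \in A_{00} \subseteq A$ already act with dense total range on $H$ (alternatively, $A$ has a cai). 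Writing $\mathcal I_N, \mathcal I_M$ for the idealizers of $A$ inside $N$ and $M$ respectively, we have $\mathcal I_N \subseteq \mathcal I_M \subseteq \mathcal I_B = M(A)$ from $N \subseteq M \subseteq B(H)$. Hence the whole proposition reduces to the single containment $\mathcal I_B \subseteq N$: once an element of $B(H)$ idealizing $A$ is known to lie in $N$, it lies in $\mathcal I_N$, and the three idealizers collapse to one.

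So let $x \in B(H)$ with $xA + Ax \subseteq A$. First I would diagonalize $x$ across the blocks. For each $m$ put $q_m = \sum_{i=1}^m e^m_{i,i}$, the unit of the $m$th block; this is a finitely supported tuple, hence lies in $A_{00} \subseteq A$, and the $q_m$ are mutually orthogonal projections with $\sum_m q_m = I_H$ in the strong operator topology. For $m \neq m'$ the element $q_m x q_{m'} = q_m (x q_{m'})$ lies in $A \subseteq N$ (as $x q_{m'} \in A$ and $q_m \in A$), and every element of the block-diagonal algebra $N$ is annihilated on both sides by distinct block units, so $q_m x q_{m'} = 0$. Summing, $x = \bigoplus_m x_m$ with $x_m = q_m x q_m \in B(H_m)$, where $H_m = \Cdb^m \oplus \Cdb^m$.

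It remains to identify each $x_m$. The key point is that the block-$m$ part of $A$ is exactly the copy $M_m'$ of $M_m$, namely $\{(y, d_m y d_m^{-1}) : y \in M_m\} \subseteq B(H_m)$, and that this is a \emph{unital} subalgebra of $B(H_m)$ whose unit is $q_m = I_{H_m}$. For $b \in M_m'$ we have $b = q_m b$, so $x_m b = q_m x b = q_m(xb)$; since $xb \in A \subseteq N$ is block diagonal, $q_m(xb)$ is its block-$m$ component and hence lies in $M_m'$, and symmetrically $b x_m \in M_m'$. Thus $x_m$ idealizes $M_m'$ in $B(H_m)$. But the idealizer of a unital subalgebra sharing the ambient unit is the subalgebra itself, since $x_m = x_m q_m \in x_m M_m' \subseteq M_m'$; so $x_m \in M_m'$. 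Finally $p_m(x_m) = \Vert x_m \Vert_{B(H_m)} \leq \Vert x \Vert$, whence $\sup_m p_m(x_m) < \infty$ and $x = (x_m) \in N$. This gives $\mathcal I_B \subseteq N$ and closes the argument; the replacement of $N$ by $M$ is automatic, as $\mathcal I_M$ is squeezed between $\mathcal I_N$ and $\mathcal I_B$.

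The main obstacle I anticipate is precisely this block-by-block step, and in particular ruling out ``off-diagonal'' behaviour of $x_m$ between the two copies of $\Cdb^m$ while also recovering the twisting relation $z = d_m y d_m^{-1}$ that defines $N$ inside $M$. The clean way around this, as indicated, is to avoid analyzing $x_m$ directly and instead observe that within the finite block the relevant subalgebra $M_m'$ is unital with unit $I_{H_m}$, so its idealizer collapses to itself; this simultaneously forces the vanishing of the off-diagonal corners and the correct twist, and is what keeps the argument short. One should then only verify the routine points: that $q_m \in A$, that $\sum_m q_m = I_H$ strongly, and that $N$ carries the $\ell^\infty$-type norm $p$ so that the bounded family $(x_m)$ genuinely defines an element of $N$.
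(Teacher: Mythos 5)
Your proof is correct, and it reaches the conclusion by a genuinely different route at the key step. Both you and the paper begin the same way: check that $A$ acts nondegenerately on $H=\oplus^2_n(\Cdb^n\oplus\Cdb^n)$ (the paper uses $D_0$, you use the matrix units $e^m_{i,i}$ -- same thing), and then invoke the recalled identification of $M(A)$ with the idealizer of $A$ in $B(H)$. Where you diverge is in showing that this idealizer sits inside $N$. The paper gets this abstractly in one line: $M(A)$ is also the idealizer of $A$ in $A^{**}$, and the weak*-continuous extension $\tilde{\pi}:A^{**}\to N$ of the inclusion carries that copy of $M(A)$ completely isometrically onto the copy in $B(H)$, which is therefore contained in the range of $\tilde{\pi}$, hence in $N$. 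You instead prove the containment by hand: the block units $q_m\in A_{00}$ force any $x\in B(H)$ idealizing $A$ to be block diagonal (since $q_m x q_{m'}=q_m(xq_{m'})q_{m'}$ with $xq_{m'}\in A\subset N$), and then each block $x_m$ idealizes the unital subalgebra $M_m'$ of $B(\Cdb^m\oplus\Cdb^m)$, whose unit is $q_m=I_{H_m}$, so $x_m=x_mq_m\in M_m'$; boundedness of $\sup_m p_m$ then puts $x$ in $N$. Your argument is more elementary and self-contained -- it avoids the compatibility of the two pictures of $M(A)$ (in $A^{**}$ and in $B(H)$) under $\tilde{\pi}$, and it makes visible exactly why the off-diagonal corners and the wrong twists are excluded -- at the cost of being longer; the paper's argument is shorter but leans on the cited multiplier-algebra machinery. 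Both correctly observe that the statement for $M$ follows by the same squeeze. The routine points you flag at the end ($q_m\in A$, $\sum_m q_m=I_H$ strongly, and the identification of the norm on $N$ with $\sup_m p_m$) are all as you say.
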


\begin{proof}
Viewing $M = \oplus_n^\infty \, (M_n  \oplus
M_n)$ as represented on $H =\oplus^2_n (\Cdb^n \oplus \Cdb^n)$, it is clear that $D_0$, and hence also $A$,
acts nondegenerately on $H$.  So the multiplier algebra $M(A)$ may be viewed as a subalgebra
of $B(H)$.
We also see that the weak* continuous extension $\tilde{\pi} : A^{**} \to N$ of the `identity map' on $A$,
is a completely isometric homomorphism from the copy of $M(A)$ in $A^{**}$  onto   the copy of $M(A)$ in $B(H)$,
and in particular, the latter is contained in $N$.    So the latter is
$M(A) = \{ x \in N : x A + Ax \subset A \}$.    A similar argument works with $M$ replaced by $N$.
\end{proof}

We note that if $D_n$ is the commutative diagonal $C^*$-algebra
in $M_n$, then there is a natural isometric copy $D$  of $\oplus_n^\infty \, D_n$ inside $N$, namely
the tuples $((x_n , x_n))$ for a bounded sequence  $x_n \in D_n$.

We assume henceforth that $c_n > 1$ for all $n$.

In the next results $\Delta(A)$ denotes the `diagonal' $A \cap A^*$ of $A$
(here $A^*$ is the set of `adjoint operators' (or `involutions') of elements in $A$).  See 2.1.2 in \cite{BLM}.

\begin{proposition} \label{diaga}  The diagonal $\Delta(A)$ equals
 the natural copy $D_0$ of   the
$c_0$-sum $C^*$-algebra $\oplus_n^\circ \, D_n$ inside $A$.
\end{proposition}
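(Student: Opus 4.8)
The plan is to establish the two inclusions $D_0 \subseteq \Delta(A)$ and $\Delta(A) \subseteq D_0$ separately. For the first, I would recall that the diagonal matrix units $e^n_{i,i}$ are self-adjoint projections lying in $A$; indeed each is a finitely supported diagonal tuple, and since $d_n e^n_{i,i} d_n^{-1} = e^n_{i,i}$ it lies in $A_{00} \subseteq A_0 \subseteq A$. The copy $D_0$ of $\oplus_n^\circ D_n$ is exactly the closed linear span of these matrix units, so as $A$ is closed we obtain $D_0 \subseteq A$. Being a $C^*$-subalgebra of $M$, the space $D_0$ is self-adjoint, whence $D_0 = D_0^* \subseteq A^*$ and therefore $D_0 \subseteq A \cap A^* = \Delta(A)$.

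For the reverse inclusion, the governing observation is that the standing assumption $c_n > 1$ forces $d_n^2 = \mathrm{diag}(c_n^2, \dots, c_n^{2n})$ to have distinct diagonal entries, so that any matrix in $M_n$ commuting with $d_n^2$ is automatically diagonal. I would take $T = (T_n) \in \Delta(A)$, so that $T \in A$ and $T^* \in A \subseteq N$. Writing $T = ((T_n, d_n T_n d_n^{-1}))$ and using $d_n^* = d_n$, one computes $T^* = ((T_n^*, d_n^{-1} T_n^* d_n))$. Membership of $T^*$ in $N$ forces its second coordinate to equal $d_n T_n^* d_n^{-1}$, i.e. $d_n^{-1} T_n^* d_n = d_n T_n^* d_n^{-1}$; rearranging yields $T_n^* d_n^2 = d_n^2 T_n^*$. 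By the observation above, each $T_n$ is then diagonal, so $d_n T_n d_n^{-1} = T_n$ and hence $T = ((T_n, T_n)) \in D$.

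It remains to upgrade membership in $D$ to membership in the $c_0$-sum $D_0$, and here I would simply invoke Lemma \ref{lem4}: since $T \in A$ we have $\Vert d_n T_n d_n^{-1} \Vert \to 0$, and as each $T_n$ is diagonal this reads $\Vert T_n \Vert \to 0$. Thus $T$ lies in the $c_0$-sum of the $D_n$, that is $T \in D_0$, and the two inclusions give $\Delta(A) = D_0$.

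I expect the only genuine subtlety to be the adjoint computation, and in particular the recognition that it is the stronger requirement $T^* \in N$ (not merely $T^* \in M$) that produces the commutation relation $T_n^* d_n^2 = d_n^2 T_n^*$; the hypothesis $c_n > 1$, imposed just before the statement, is precisely what converts that relation into ``$T_n$ is diagonal.'' Once this is in place, the passage from $D$ to $D_0$ is immediate from Lemma \ref{lem4}, and the first inclusion is routine.
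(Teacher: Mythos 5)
Your proof is correct and follows essentially the same route as the paper: both arguments reduce to the observation that selfadjointness of elements of $N$ (in your version, the requirement $T^*\in N$) forces $T_n$ to commute with $d_n^2$ and hence to be diagonal, giving $\Delta(A)=D\cap A\supseteq D_0$. The only cosmetic difference is in passing from $D\cap A$ to $D_0$: you invoke Lemma \ref{lem4}, which is exactly the route the paper says ``follows easily'' before opting instead for a short direct argument using that $u_{n,k}$ is eventually strictly upper triangular.
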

\begin{proof}       If $((x_n , d_n x_n d_n^{-1}))$ is selfadjoint,
then $x_n$ is  selfadjoint, and $d_n x_n d_n^{-1}$ is selfadjoint, which forces $d_n^2$ to commute with $x_n$.
However this implies that $x_n$ is diagonal.  Since $\Delta(N) = N \cap N^*$ is spanned by its selfadjoint
elements it follows that $\Delta(N) = D$.
Therefore  $\Delta(A) = D \cap A$,  and this contains $D_0$ since $D_0 \subset A_0 \subset A$
by Corollary \ref{isab}.  The reverse containment follows easily from Lemma \ref{lem4}, but we give
a shorter proof. Let  $(a_n) \in  D \cap A$, with $a_n \in D_n$
for each $n$.  If $\epsilon > 0$ is given, choose $k$ such that $p(u_k (a_n)  - (a_n)) < \epsilon$.
Choose $m$ with  $u_{n,k}$ strictly upper triangular for all $n \geq m$.  Then for
$n \geq m$ we have $|a_n(i) |$, which is the modulus of the  $i$-$i$ entry of $(u_k (a_n)  - (a_n))$, is
dominated by $$\Vert  u_{n,k} \, a_n - a_n \Vert \leq p(u_k  \, (a_n)  - (a_n)) < \epsilon.$$
Thus $\Vert a_n \Vert < \epsilon$ for $n \geq m$, so that $(a_n) \in D_0$.     \end{proof}

We recall some notation from e.g.\ \cite[Chapter 2]{BLM} and \cite{BHN}.
By a {\em projection} we mean an orthogonal projection.  The
second dual $A^{**}$ is also an operator algebra with its (unique)
Arens product, this is also the product inherited from the von Neumann
algebra $B^{**}$ if
$A$ is a subalgebra of a $C^*$-algebra $B$.
Note that $A^{**}$ has an identity $1_{A^{**}}$ of norm $1$ since 
$A$ has a cai.
We say that a projection $p \in A^{**}$ is an {\em open projection}
if there is a net $x_t \in A$ with $x_t = p x_t  \to p$ weak*,
or equivalently with $x_t = p x_t p \to p$ weak* (see  \cite[Theorem 2.4]{BHN}).  These are
also the open projections $p$ in the sense of Akemann \cite{Ake2} in $B^{**}$, where $B$ is a $C^*$-algebra containing $A$, such that
$p \in A^{\perp \perp}$.   The complement $p^\perp = 1_{A^{**}} - p$
 of an open projection for $A$
 is called a {\em closed projection} for $A$.

\begin{corollary} \label{ispro}    Projections in  $A^{**}$ which are both open and closed,
or equivalently (by  \cite[Example 2.1]{BHN} and the first lines of the proof of
\cite[Proposition 2.12]{ABS}) which are in the multiplier algebra
$M(A)$, 
must be also in $D$.  Thus they are diagonal matrices
with  $1$'s as the only permissible nonzero entries.
\end{corollary}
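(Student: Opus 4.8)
The plan is to reduce everything to two facts already in hand: the identification of the multiplier algebra in Proposition \ref{multis}, and the computation of the diagonal $\Delta(N)$ made during the proof of Proposition \ref{diaga}. So let $p \in A^{**}$ be a projection that is both open and closed. By the equivalence quoted in the statement, $p$ lies in the multiplier algebra $M(A)$. Proposition \ref{multis} lets me realize $M(A)$ as $\{ x \in N : xA + Ax \subset A \}$, and in particular this places $p$ inside $N$.

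The key step I would use next is that $p$, being a projection in the sense adopted in this paper, is a genuine orthogonal projection in $B^{**}$, hence self-adjoint: $p = p^* = p^2$. Combined with $p \in N$ this gives $p \in N \cap N^* = \Delta(N)$. But in the course of proving Proposition \ref{diaga} it was shown that $\Delta(N) = D$: a self-adjoint tuple $((x_n , d_n x_n d_n^{-1}))$ forces $d_n^2$ to commute with the self-adjoint $x_n$, whence $x_n$ is diagonal, and since $\Delta(N)$ is spanned by its self-adjoint elements the equality $\Delta(N) = D$ follows. Therefore $p \in D$.

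Finally I would read off the last assertion. Writing $p = ((x_n , x_n))$ with each $x_n \in D_n$, the relations $p = p^2 = p^*$ say precisely that each $x_n$ is a self-adjoint idempotent diagonal matrix, that is, a diagonal matrix whose entries lie in $\{ 0, 1 \}$. This is exactly the claim that the open-and-closed projections are diagonal matrices having $1$'s as their only permissible nonzero entries.

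I expect no genuine obstacle here, only one point that needs care: the very first step, namely confirming that an open-and-closed projection is both self-adjoint and actually lands in $N$ under the multiplier identification. Once $p$ is known to be a self-adjoint element of $N$, the conclusion is immediate from the already-established equality $\Delta(N) = D$, and no estimates of the kind used in Lemmas \ref{lem5} and \ref{lem6} are needed.
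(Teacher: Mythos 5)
Your proposal is correct and follows exactly the paper's own (very terse) argument: the paper's proof simply cites Proposition \ref{multis} to place the projection in $N$ and the fact $\Delta(N)=D$ from the proof of Proposition \ref{diaga}, with self-adjointness of the (orthogonal) projection doing the rest. The only thing you flag as needing care --- that the projection is self-adjoint --- is already built into the paper's convention that ``projection'' means orthogonal projection, so there is no gap.
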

\begin{proof}   This follows from Proposition \ref{multis} and the fact from the proof of
Proposition \ref{diaga} that $\Delta(N) = D$.
\end{proof}

{\bf Remark.}  Note that the natural approximate identity for $\Delta(A) = D_0$ is not an approximate identity for $A$
(since $D_0 A \subset A_0 A \subset A_0 \neq A$).   Thus $A$ is not $\Delta$-dual in the sense of \cite{ABS}.    By \cite{Read} we know that
A has an approximate identity which is `positive' in a certain sense.

\bigskip

We recall that an {\em r-ideal} in $A$ is a right ideal with a left cai, and an {\em $\ell$-ideal} is a left ideal with a right cai.
These objects are in bijective correspondence with the
open projections in $A^{**}$.  Indeed, the limit of such 
one-sided cai in $A^{**}$ exists, and is an open projection
in $A^{**}$ called the {\em support projection} of the 
one-sided ideal.   Conversely, if $p$ is an open projection 
in $A^{**}$ then $\{ a \in A : pa = a \}$ is an r-ideal
(and replacing $pa$ here by $ap$ gives an $\ell$-ideal).  

We recall from \cite{ABS} that $A$ is  {\em nc-discrete} if all the open projections in $A^{**}$ are
also closed (or equivalently, as we said above,  lie in the multiplier algebra $M(A)$).
In \cite[p.\ 76]{ABR} we asked if every approximately unital (semisimple) operator algebra which is a modular annihilator algebra, is weakly compact, or is nc-discrete in the sense of \cite{ABS}.     
In \cite{ABR} we showed that any operator algebra which  is weakly compact is nc-discrete.
 To see that our example  $A$ is not nc-discrete 
note that $A_0$ is an r-ideal in $A$
(and an $\ell$-ideal), and its support projection $p$  in $A^{**}$, which is central in $A^{**}$, coincides with the support projection of $D_0$ in $A^{**}$,
and this is an open projection in  $A^{**}$ which we will show is not closed.

\begin{corollary} \label{iscr}    The algebra $A$ above is not nc-discrete.
\end{corollary}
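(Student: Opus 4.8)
The plan is to exhibit a single open projection in $A^{**}$ that fails to be closed; by the definition recalled just above, this shows $A$ is not nc-discrete, since nc-discreteness asks that \emph{every} open projection be closed. The candidate, as flagged in the discussion preceding the statement, is the support projection $p$ of the r-ideal $A_0$, which coincides with the support projection of $D_0$ and is open by its very construction as the weak* limit of the natural cai of $D_0$. I would then show $p$ is not closed by contradiction, the goal being to force $p = 1_{A^{**}}$ and contradict the properness of the ideal $A_0$.

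First I would assume $p$ is closed. Being also open, $p$ is then both open and closed, so Corollary \ref{ispro} places it in the multiplier algebra $M(A)$, and in fact in $D$; viewing $M(A) \subseteq N$ as in Proposition \ref{multis}, $p$ is therefore a tuple $((P_n, P_n))$ in which each $P_n \in M_n$ is a diagonal projection with entries in $\{0,1\}$. Next I would use that the diagonal matrix units $e^n_{i,i}$ all lie in $D_0 \subseteq A_0$: since $p$ is the support projection of the r-ideal $A_0$, one has $p\, e^n_{i,i} = e^n_{i,i}$ for every $n$ and $i$, a product that may be computed coordinatewise in $N$ because the multiplier action of $M(A)$ on $A$ is just multiplication inside $N$. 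Reading off the $n$-th coordinate gives $(P_n)_{i,i}\, e^n_{i,i} = e^n_{i,i}$, whence $(P_n)_{i,i} = 1$ for all $i$, so $P_n = I_n$; as this holds for every $n$, I would conclude $p = 1_{A^{**}}$.

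This produces the contradiction. The support projection of an r-ideal equals $1_{A^{**}}$ exactly when the ideal is all of $A$, so $p = 1_{A^{**}}$ would force $A_0 = A$. But $A_0$ is separable, being a closed subspace of the $c_0$-sum $\oplus^\circ_n (M_n \oplus M_n)$, whereas $A$ is not separable by Theorem \ref{nowk}; hence $A_0 \neq A$. Thus $p$ cannot be closed, and $A$ is not nc-discrete.

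The step I expect to be most delicate is the identification and coordinate computation of $p$: one must be sure that $p$ genuinely is open and equals the common support projection of $A_0$ and $D_0$ (as asserted in the preamble), and that, once $p$ is known to lie in $M(A) \subseteq N$, the relation $p\, e^n_{i,i} = e^n_{i,i}$ may legitimately be read coordinatewise. Both points rest on Proposition \ref{multis} together with the compatibility of the Arens product on $A^{**}$ with the multiplication in $N$. By comparison, the invocation of Corollary \ref{ispro} and the separability gap $A_0 \neq A$ should be routine.
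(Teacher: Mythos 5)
Your argument is correct and follows the paper's proof in all essentials: assume $p$ is closed, place it in $M(A) \subseteq N$ via Proposition \ref{multis}, and use $p\, e^n_{i,i} = e^n_{i,i}$ (computed in $N$ via $\tilde{\pi}$) to force the image of $p$ to act as the identity. The paper extracts the final contradiction by noting that $\tilde{\pi}(1-p)$ would be a nonzero projection annihilating every $e^n_{i,i}$, whose strong operator topology sum in $M$ is $1$, whereas you conclude $p = 1_{A^{**}}$ and hence $A_0 = A$, contradicting the non-separability of $A$ from Theorem \ref{nowk} --- a cosmetic variation that usefully makes explicit the properness of $A_0$, which the paper's ``nonzero'' claim leaves implicit.
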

\begin{proof}  We saw that $p$ above was open.   If $p$ also was closed in $A^{**}$, or equivalently in the multiplier 
algebra $M(A)$, then
$\tilde{\pi}(1-p)$ would be a nonzero central projection in the  copy of $M(A)$ in $M$.
Also $\tilde{\pi}(1-p) e^n_{i,i}$ is nonzero for some $n$ and $i$, because the strong operator topology sum of the $e^n_{i,i}$ in $M$ is $1$.
On the other hand, since $e^n_{i,i}$ is in the ideal supported by $p$ we have $$\tilde{\pi}(p) e^n_{i,i}  =
 \tilde{\pi}(p \, e^n_{i,i}) = \tilde{\pi}(e^n_{i,i}) =e^n_{i,i},$$ and so
$$\tilde{\pi}(1-p) \,  e^n_{i,i} = \tilde{\pi}(1-p) \,  \tilde{\pi}(p) \, e^n_{i,i}  = 0 .$$  This contradiction shows that
$A$ is not nc-discrete.    \end{proof}

Indeed $A_0$ is a nice r- and $\ell$-ideal in $A$ which is supported by an  open projection which
is not one of the obvious projections, and is not any projection in $M(A)$.
Note that $A$
is not
a left or right annihilator algebra in the sense of e.g.\ \cite[Chapter 8]{Pal},
since for example by \cite[Chapter 8]{Pal} this implies that $A$ is compact, whereas
above we showed that $A$ is not even weakly compact.    The spectrum of $A$
is discrete, and every left ideal of $A$ contains a minimal
left ideal, by \cite[Theorem 8.4.5 (h)]{Pal}.  Also every idempotent in $A$ belongs
to the socle by \cite[Theorem 8.6.6]{Pal}, hence to
$A_{00}$ by the next result.   From this it is clear what all the
idempotents  in $A$ are.

\begin{corollary} \label{issoc} The maximal modular right (resp.\ left) ideals in
$A$ are exactly the ideals of the form $(1-e) A$ (resp.\ $A(1-e)$) for
a minimal idempotent $e$ in $A$ which is the canonical copy in $A$ of
a minimal idempotent in $M_n$ for some $n \in \Ndb$.  The socle  of $A$
is $A_{00}$, namely the set of $(a_n) \in A$ with $a_n = 0$ except for at most finitely
many $n$.
\end{corollary}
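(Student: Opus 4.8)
The plan is to compute the socle first, and then read off the maximal modular one-sided ideals, since the two statements are tightly linked through the minimal idempotents of $A$.

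\textbf{The socle equals $A_{00}$.} First I would show that every idempotent of $A$ lies in $A_{00}$. If $e = (e_n) \in A$ satisfies $e^2 = e$, then each $e_n$ is an idempotent of $M_n$, so $d_n e_n d_n^{-1}$ is again an idempotent and hence has norm $0$ or $\geq 1$; since $\Vert d_n e_n d_n^{-1} \Vert \to 0$ by Lemma \ref{lem4}, we must have $d_n e_n d_n^{-1} = 0$, and so $e_n = 0$, for all large $n$, giving $e \in A_{00}$. Because the product on $N$ is coordinatewise, $A_{00}$ is a two-sided ideal of $A$ (a finitely supported tuple multiplied by any element of $A$ stays finitely supported and remains in $A$). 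Now the socle is the sum of the minimal left ideals $Ae$ over minimal idempotents $e$; each such $e$ lies in $A_{00}$, and $A_{00}$ is an ideal, so $Ae \subseteq A_{00}$ and hence ${\rm soc}(A) \subseteq A_{00}$. Conversely, each $e^n_{i,i}$ is a minimal idempotent (as recorded just before Theorem \ref{nowk}) and so lies in the socle; as the socle is a two-sided ideal containing each $e^n_{j,j}$ and $e^n_{i,j} = e^n_{i,j} e^n_{j,j} \in A$, every matrix unit $e^n_{i,j}$ lies in the socle, whence $M_n' \subseteq {\rm soc}(A)$ and therefore $A_{00} = \sum_n M_n' \subseteq {\rm soc}(A)$. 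Thus ${\rm soc}(A) = A_{00}$.

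\textbf{Identifying minimal idempotents, and the easy inclusion.} I would next record that the minimal idempotents of $A$ are exactly the canonical copies of minimal (rank-one) idempotents of the $M_n$: a minimal idempotent $e$ satisfies $eAe = \Cdb e$, and since $e \in A_{00}$ and the product is coordinatewise, $eAe$ contains the corner $e_n M_n e_n$ in each coordinate $n$ of the support of $e$; one-dimensionality then forces a single coordinate in the support, with $e_n$ of rank one. Now fix such an $e$ and set $R = (1-e)A := \{ a - ea : a \in A \} = \{ a \in A : ea = 0 \}$. Then $R$ is a modular right ideal with left modular unit $e$, since $e(a - ea) = 0$ for all $a$. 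The map $a \mapsto ea$ is a surjective right-module map $A \to eA$ with kernel $R$, so $A/R \cong eA$; as $e$ is a minimal idempotent, $eA$ is a minimal right ideal, hence a simple right $A$-module, so $R$ is a maximal right ideal, and being modular it is a maximal modular right ideal.

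\textbf{The converse.} Let $R$ be any maximal modular right ideal, with left modular unit $u$ (so $a - ua \in R$ for all $a$). Since $A$ is a modular annihilator algebra (Corollary \ref{isnotma}), the left annihilator ${\rm lan}(R) = \{ b \in A : bR = 0 \}$ is nonzero (see \cite[Chapter 8]{Pal}). As ${\rm lan}(R)$ is a left ideal, by \cite[Theorem 8.4.5 (h)]{Pal} it contains a minimal left ideal $Af$ with $f$ a minimal idempotent and $f \in Af \subseteq {\rm lan}(R)$, so $fR = 0$. Hence $R \subseteq \{ a \in A : fa = 0 \} = (1-f)A$, and the latter is a proper right ideal since $f \cdot f = f \neq 0$. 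Finally any proper right ideal $R' \supseteq R$ is itself modular with the same unit $u$ (as $a - ua \in R \subseteq R'$), so $R' = R$ by the maximality of $R$ among proper modular right ideals; applying this with $R' = (1-f)A$ yields $R = (1-f)A$, where $f$ is a minimal idempotent of the required form by the previous paragraph. The statement for left ideals follows by the symmetric argument, using ${\rm ran}(L) \neq 0$ for maximal modular left ideals $L$ together with the right-ideal analogue of \cite[Theorem 8.4.5 (h)]{Pal}.

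The main obstacle is this converse direction: attaching the correct minimal idempotent to an abstract maximal modular right ideal $R$. The decisive leverage is the modular annihilator property, which forces ${\rm lan}(R) \neq 0$ and thereby, via the cited results of \cite{Pal}, supplies a minimal idempotent annihilating $R$ on the left; the maximality of $R$ then pins down $R = (1-f)A$ exactly. By comparison the remaining ingredients — the coordinatewise idempotent analysis powered by Lemma \ref{lem4} and the module isomorphism $A/(1-e)A \cong eA$ — are routine once ${\rm soc}(A) = A_{00}$ is in hand.
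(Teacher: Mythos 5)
Your proof is correct, and it is rather more self-contained than the paper's own proof, which is terse and leans on citations to \cite{Pal}. Two steps are genuinely different. (1) To localize idempotents you use Lemma \ref{lem4}: $d_n e_n d_n^{-1}$ is an idempotent of norm tending to $0$, hence eventually zero, so every idempotent of $A$ lies in $A_{00}$; minimality then cuts a minimal idempotent down to a single block. The paper argues instead that since the $e^n_{i,i}$ sum to $1$ in the strong operator topology, a minimal idempotent $e$ has $e e^n_{i,i} e \neq 0$ for some $n,i$, and $eAe = \Cdb e$ then places $e$ inside the copy of $M_n$ directly. Your route yields the stronger fact that \emph{all} idempotents of $A$ are finitely supported (which the paper only deduces afterwards from the socle computation via \cite[Theorem 8.6.6]{Pal}). (2) For the maximal modular one-sided ideals the paper simply cites \cite[Proposition 8.4.3]{Pal}, together with the observation that $A$ has no right annihilators; you reprove that proposition in this setting, using the modular annihilator property from Corollary \ref{isnotma} to get ${\rm lan}(R) \neq 0$, then \cite[Theorem 8.4.5 (h)]{Pal} to extract a minimal idempotent $f$ with $fR = 0$, and the module isomorphism $A/(1-e)A \cong eA$ for the forward inclusion. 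Both routes are sound, and yours makes the role of Corollary \ref{isnotma} explicit where the paper leaves it implicit. The one point you should spell out is the converse half of your identification of minimal idempotents: if $e$ is the canonical copy of a rank-one idempotent of $M_n$, then for every $a \in A$ the element $eae$ is supported in coordinate $n$ and lies in $e_n M_n e_n = \Cdb e_n$, so $e$ really is a minimal idempotent of $A$; you assert the ``exactly'' but argue only one direction, and this easy direction is what guarantees that $eA$ is a minimal right ideal in your second paragraph.
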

\begin{proof}   Let $e = (e_n)$ be a (nonzero) minimal idempotent in $A$.
Then $e_n$ is an idempotent in $M_n$ for each $n$.
If $e^n_{i,i}$ is as above, then because the strong operator topology sum of the $e^n_{i,i}$ in $M$ is $1$,
we must have $e e^n_{i,i} e \neq 0$ for some $n$ and $i$.
Since $e$ is minimal, for such  $n$, $e$ is in the copy of $M_n$ in $A_0$.
So this $n$ is unique, and $e$ is clearly a
minimal idempotent in this copy of $M_n$ in $A_0$.  Now it is
easy to see the assertion about the socle of $A$.   By \cite[Proposition 8.4.3]{Pal},
it follows that the maximal modular left ideals  in $A$  are the  ideals
$A(1-e)$ for an $e$ as above.
We have also used the fact here that $A$ has no right annihilators in $A$.
Similarly for right ideals.
 \end{proof}

\begin{corollary} \label{oncom}    The only compact projections 
(in the sense of {\rm \cite{BNII}}) in $A^{**}$ for
the algebra $A$ above are the obvious `main diagonal' ones; that is the projections in
$D_0 \cap A_{00}$.
\end{corollary}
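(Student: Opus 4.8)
The plan is to prove a two-sided containment: that every projection in $D_0 \cap A_{00}$ is compact, and conversely that every compact projection lies there. For the easy inclusion, a projection $q \in D_0 \cap A_{00}$ lies in $A \subseteq M(A)$, so it is both open and closed by Corollary \ref{ispro}; in particular it is closed, and since $q$ is a contraction in $A$ with $q = q\,q$, it is dominated by a contraction in $A$ (indeed by itself). Hence $q$ is a compact projection.

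For the converse I would use that a compact projection $q$ is closed and, by \cite{BNII}, is dominated by a contraction: there is $a \in \mathrm{Ball}(A)$ with $q = qa$, whence $q = q a^m$ for every $m \in \Ndb$. For each $n$ let $\psi_n \colon A^{**} \to M_n$ be the weak* continuous homomorphism extending the $n$th coordinate map $A \to M_n$ (this is $\phi_n^{**}$, which is legitimate and multiplicative since $M_n$ is finite dimensional). Applying $\psi_n$ to $q = q a^m$ gives $\psi_n(q) = \psi_n(q)\, a_n^m$. By Lemma \ref{lem4} the spectral radius $r(a_n) \to 0$, so there is $N_1$ with $r(a_n) < 1$ for $n \geq N_1$; for such $n$ we have $a_n^m \to 0$ as $m \to \infty$, forcing $\psi_n(q) = 0$. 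Thus $\psi_n(q)$ vanishes for all but finitely many $n$.

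Next I must rule out a `part at infinity'. Let $p$ be the central support projection of the ideal $A_0$, as in Corollary \ref{iscr}; then $p A^{**} \cong (A_0)^{**}$ is the coordinatewise algebra $N$, on which the $\psi_n$ are exactly the coordinate maps, while $\psi_n(p^\perp) = 0$ so each $\psi_n$ annihilates $p^\perp A^{**}$. Consequently the previous step only controls $pq$, and to finish I would show $p^\perp q = 0$. Since $p$ is central, $p^\perp q = (p^\perp q)(p^\perp a)^m$, so it suffices to prove that $p^\perp a$ is quasinilpotent, that is, that $a + A_0$ is quasinilpotent in $A/A_0$. Granting this, $\Vert (p^\perp a)^m \Vert \to 0$ forces $p^\perp q = 0$; hence $q = pq \in N$ is supported on finitely many coordinates, so $q \in A_{00}$, and being selfadjoint each block $q_n$ must commute with $d_n^2$ and so be diagonal (exactly as in the proof of Proposition \ref{diaga}). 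Therefore $q \in D \cap A_{00} = D_0 \cap A_{00}$.

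The main obstacle is the quasinilpotence of $a + A_0$, and this is where Corollary \ref{isnotma} does the real work. I would argue that $\mathrm{Sp}_{A/A_0}(a + A_0) \subseteq \mathrm{Sp}_A(a)$ (passing to unitizations), which by Corollary \ref{isnotma} is a null sequence together with $0$; so any nonzero $\lambda$ in the quotient spectrum would be isolated, with a nonzero Riesz idempotent $e = \frac{1}{2\pi i} \oint (\zeta - a)^{-1}\, d\zeta$ lying in $A$. But the uniform resolvent bounds of Lemma \ref{lem6} show that, once $\epsilon$ is chosen with $4 p(a)\epsilon < |\lambda|$, one has $\lambda \notin \mathrm{Sp}(a_n)$ for all large $n$, so $\psi_n(e) = 0$ eventually and hence $e \in A_{00} \subseteq A_0$. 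Then the image of $e$ in $A/A_0$ is simultaneously the nonzero Riesz idempotent of $a + A_0$ at $\lambda$ and equal to $0$, a contradiction. Thus the quotient spectrum is $\{0\}$, $a + A_0$ is quasinilpotent, and the argument is complete.
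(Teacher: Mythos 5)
Your proof is correct, but it takes a genuinely different route from the one in the paper. The paper invokes the characterization from \cite{BNII} of compact projections as decreasing limits of peak projections $u(T)$ with $T \in \frac{1}{2}{\mathfrak F}_A$, and shows each such $u(T)=\mathrm{w}^*\text{-}\lim_k T^k$ lies in $D_0\cap A_{00}$ by splitting $T$ with the \emph{finite} central projection $q_0\in A$ supported on the blocks $n<m_0$: the tail $S=T(1-q_0)$ has spectral radius at most $4p(T)\epsilon<1$ by the resolvent bounds behind Corollary \ref{isnotma}, so $S^k\to 0$ in norm, while $T^kq_0$ converges in a finite-dimensional block. Because that convergence is in norm, the paper never has to confront the ``part at infinity'' of $A^{**}$ separately. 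You instead start from the other \cite{BNII} characterization ($q$ closed with $q=qa$, $a\in{\rm Ball}(A)$), split $q$ by the support projection $p$ of $A_0$ in $A^{**}$, kill the coordinates of $pq$ one at a time via Lemma \ref{lem4}, and then must do extra work to kill $p^\perp q$, which you reduce to quasinilpotence of $a+A_0$ in $A/A_0$ and prove by a Riesz-idempotent contradiction resting on Lemma \ref{lem6}. All the steps check out (in particular the spectral idempotent at an isolated nonzero $\lambda$ does lie in $A$, is eventually annihilated by every coordinate map, hence lies in $A_{00}\subset A_0$, contradicting its nonvanishing in the quotient), and you also supply the easy converse inclusion that the paper leaves implicit. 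What your route buys is the explicit and independently interesting fact that every element of $A/A_0$ is quasinilpotent; what it costs is length. Note that your hardest step can be streamlined: with $S$ as above, $T^m-S^m\in A_0$, so $\Vert T^m+A_0\Vert\le\Vert S^m\Vert$ and hence $r_{A/A_0}(T+A_0)\le r(S)\le 4p(T)\epsilon$ for every $\epsilon>0$, giving quasinilpotence in the quotient directly from Lemma \ref{lem6} without any contour integrals.
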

\begin{proof}  Let $T = (T_n) \in A$, and $\epsilon \in (0,\frac{1}{4 p(T)})$ be given.  As in the proof
of Corollary \ref{isnotma} there exists $m_0 \in \Ndb$
such that if $|\lambda| > 4 p(T) \epsilon$
then
 $\lambda I - T_n$ is invertible for $n \geq m_0$, and the usual matrix norm of its inverse
is bounded independently of $n  \geq m_0$.   As in that proof, if $S_n = T_n$ for $n \geq m_0$, and
$S_n = 0$ for $n < m_0$, then $\lambda I - S$ is invertible in $N$.  Thus the spectral
radius $r(S) \leq 4 p(T) \epsilon < 1$. Hence $\lim_{k \to \infty} \,  S^k = 0$ in norm.
Let $q$ be the central projection in $A$ corresponding
to the identity of $\oplus_{n=1}^{m_0 -1} \, M_n$.
If now also $T \in \frac{1}{2} {\mathfrak F}_A$, then $T^k$ converges
weak* to its peak projection $u(T)$ weak*
by \cite[Lemma  3.1, Corollary 3.3]{BNII}, as $k \to \infty$.
Thus $T^k q \to u(T) q$ and $T^k (1-q) = S^k  \to u(T) (1-q)$ weak*.
Clearly it follows that $u(T) q$ is a projection in $A$, hence in $D_0 \cap A_{00}$ as we
said above. On the other hand, since $S^k  \to 0$ we have $u(T) (1-q) = 0$.
Thus $u(T)$ is a projection in $D_0 \cap A_{00}$.

Finally we recall from \cite{BNII} that the compact projections in $A^{**}$
are decreasing limits of such $u(T)$.    Thus any
compact projection is in $D_0 \cap A_{00}$. \end{proof}

One may ask if there exists a  {\em commutative} semisimple approximately unital operator algebra which is a modular annihilator algebra but is not weakly compact.   Later,
after this paper was submitted
we were able to check that the algebra
constructed in  \cite{BRIV} was such an algebra.  However this example
 is quite a bit more complicated than the interesting noncommutative example above.

\section{A complementary example}
 
In \cite[p.\ 76]{ABR} we asked if for an approximately unital commutative operator algebra $A$, which is an ideal in its bidual (or equivalently
 that multiplication by any fixed element of $A$ is weakly compact), is the spectrum of every element at most countable; and is the spectrum of $A$ scattered?   In particular, is it a
modular annihilator algebra  (we recall that
compact semisimple algebras are modular
annihilator algebras \cite[Chapter 8]{Pal}).
There is in fact an easy semisimple counterexample to these questions, which is quite well known in other contexts.
The algebra $A$ will in fact be unital and  isomorphic to a Hilbert space, so is  Banach space reflexive, hence
is obviously  an ideal in its bidual.  It is also singly generated by an operator $T$, and the identity $I$, so that by basic Banach algebra theory the spectrum 
of $A$ is homeomorphic to Sp$_A(T)$.
The example may be described either in the operator theory language of weighted unilateral shifts, and the $H^p(\beta)$ spaces that occur there, or in the Banach algebra language of weighted convolution algebras $l^p(\Ndb_0,\beta)$.
These are equivalent (in particular, $H^2(\beta) = l^2(\Ndb_0,\beta)$). 
We  begin with the Banach algebraic angle:
The weighted convolution algebras $l^1(\Ndb_0,\beta)$ are much studied (see e.g.\ \cite{Dal}), and 
they are Banach algebras
whenever the weight $\beta$ is an ``algebra weight", i.e. 
$\beta_{i+j}\le \beta_i \beta_j$ for all $i,j$. Sometimes, moreover,
the weighted $l^2$ space $l^2(\Ndb_0,\beta)$ is a Banach algebra under the 
convolution product, and in such
cases it is an operator algebra that is isomorphic (as Banach space) to a 
Hilbert space. One such case is
the weight $\beta_n=C(1+n)$ for suitable $C>1$. In any such case the generator 
acts on $l^2(\Ndb_0,\beta)$ as a
weighted shift operator which is unitarily equivalent to a weighted shift on 
$l^2(\Ndb_0)$ with weights $w_i=\beta_{i+1}/\beta_i$. 

From the
 operator theory angle, 
 in the 1960's and 70's, operator theorists exhaustively studied weighted shifts and the algebras
they generate.  See e.g.\ Shields' 1974 survey  \cite{Sh} for this 
and the details below.
Let $T$ be a weighted unilateral shift which is  one-to-one (that is,
none of the weights $w_n$ are zero), and let $A$ be  the algebra  generated by $T$.
Then $A$ is  isomorphic to a Hilbert space if  $T$ is  {\em strictly cyclic} in Lambert's sense \cite{Sh},
that is there is a vector $\xi \in H$ such that $\{ a \xi : a \in A \} = H$.  
Central to the theory of weighted shifts is the convolution algebra
$H^2(\beta) =  l^2(\Ndb_0,\beta)$, and its space of `multipliers' 
$H^\infty(\beta)$.  These spaces can canonically be viewed as spaces of converging (hence analytic) power series on a disk, via the map $(\alpha_n) \mapsto \sum_{n = 0}^\infty \, \alpha_n z^n$.  Here $\beta$ is a sequence related to the weights $w_n$ above by the formula
$\beta_n = w_0 w_1 ... w_{n-1}$.  For example,
one such sequence is given by $\beta_n = n+1$, an example mentioned in the
last paragraph, and the spectral radius of the weighted shift here is 1.
By the well known theory in \cite{Sh}, $T$ is
unitarily equivalent to multiplication by $z$ on $H^2(\beta)$, the latter viewed as a space of power series on the disk
of radius $r(T)$.   In our strictly cyclic case, $A$, which equals its weak closure, is unitarily equivalent via the same unitary to
 $H^\infty(\beta)$.  Since $H^\infty(\beta) H^2(\beta) \subset H^2(\beta)$ and the constant polynomial is in $H^2(\beta)$, it is clear that $H^\infty(\beta) \subset H^2(\beta)$.  However, since the constant polynomial $1$ is a strictly cyclic vector, we in fact have $H^\infty(\beta) = H^\infty(\beta) 1 = H^2(\beta)$  (see p.\ 94 in \cite{Sh}).
On the same page of that reference we see that the closed disk $D$ of radius $r(T)$ is the maximal ideal space of $H^\infty(\beta)$, and the spectrum of any $f \in H^\infty(\beta)$ is $f(D)$.  In particular, Sp$_A(T) =D$, and $A$ is semisimple.  We remark in connection with a 
discussion with Dales, that this implies that $A$ is a natural Banach function
algebra on the disk, but it is not a Banach sequence space in the sense of
Section 4.1 in \cite{Dal}, since $A$  contains
no nontrivial idempotents (since $A$ may be viewed as analytic functions on a disk).

\medskip

{\em Acknowledgements and Remarks.}   This paper was originally the last 
part of a preprint entitled ``Operator algebras with contractive approximate identities III''.
The first author wishes to thank the departments
 of Mathematics at the Universities of
Leeds and Lancaster, and in particular the second author and Garth Dales,
for their warm hospitality during a visit in April--May 2013.     We also gratefully acknowledge   support from UK research council
grant  EP/K019546/1
for largely funding that visit.

\end{document}